\newtheorem{theorem}{Theorem}
\newtheorem{remark}{Remark}
\newtheorem{lemma}{Lemma}
\newtheorem{proposition}{Proposition}
\newtheorem{definition}{Definition}
\newtheorem{corollary}{Corollary}
\title{Solutions of Quadratic First-Order ODEs applied to Computer Vision Problems\\ Plane Curve Reconstruction \\ UAH} 
\author[1]{David Casillas-Perez}
\author[1]{Daniel Pizarro}
\author[2]{Adrien Bartoli}
\author[1]{Manuel Mazo}
\affil[1]{Department of Electronic, University of Alcal\'a, Alcal\'a de Henares, Spain}
\affil[2]{ISIT - CNRS/Universit\'e d'Auvergne, Clermont-Ferrand, France}
\affil[ ]{\textit{\{casillasperezdavid,dani.pizarro,adrien.bartoli\}@gmail.com, manuel.mazo@uah.es}}
\date{\today} 
\begin{document}

\maketitle 

\begin{abstract}
This article is a study about the existence and the uniqueness of solutions of a specific quadratic first-order ODE that frequently appears in multiple reconstruction problems.
It is called the \emph{planar-perspective equation} due to the duality with the geometric problem of reconstruction of planar-perspective curves from their modulus.
Solutions of the \emph{planar-perspective equation} are related with planar curves parametrized with perspective parametrization due to this geometric interpretation.
The article proves the existence of only two local solutions to the \emph{initial value problem} with \emph{regular initial conditions} and a maximum of two analytic solutions with \emph{critical initial conditions}.
The article also gives theorems to extend the local definition domain where the existence of both solutions are guaranteed.
It introduces the \emph{maximal depth function} as a function that upper-bound all possible solutions of the \emph{planar-perspective equation} and contains all its possible \emph{critical points}.
Finally, the article describes the \emph{maximal-depth solution problem} that consists of finding the solution of the referred equation that has maximum the depth and proves its uniqueness.
It is an important problem as it does not need initial conditions to obtain the unique solution and its the frequent solution that practical algorithms of the state-of-the-art give.
\end{abstract}

\section{Introduction}
\label{sec:introduction}
Several computer vision problems are described in terms of Partial Differential Equations (PDEs) and Ordinary Differential Equations (ODEs).
In reconstruction of objects, as soon as we abandon the rigid reconstruction problems, PDEs and ODEs play a fundamental role describing the shape deformations of the object to reconstruct.
To such an extend that we can reduce the deformable reconstruction problem to solving specific PDEs and ODEs.
For instance, focusing on 3D reconstruction of deformable surfaces, they are used to impose local constrains in isometric, conformal and equiareal Shape-from-Template (SfT) problems~\cite{Bartoli2015}.
In Non-Rigid Structure-from-Motion (NRSfM)~\cite{Parashar2016,Parashar2018} PDEs are used to the same purpose. Considering the 3D curve reconstruction problem, the PDEs defines possible stretchings too.
Again in the ancient reconstruction problem Shape-from-Shading (SfS), they appear to define formation models of images as Lambertian models. In case of planar curve reconstruction problem,  ODEs are used to establish 1DSfT~\cite{Gallardo2015}.

This huge list of example can be extended.
The study of different methods for solving differential equations is required and theorems that guarantees the uniqueness of reconstruction are needed.

Some of the cited problems are called well-posed problems in the sense of they are determined itself without adding constrains.
It is the case of isometric SfT where the problem is described by $3$ PDEs in $3$ unknown variables.
The uniqueness of solutions is derived from the fact that each point is algebraically determined by the equations~\cite{Bartoli2015}.
However, the majority of these problems need additional constrains to be solved uniquely.
Most of the reconstruction algorithms calculate one of all possible solutions without taking care of the others, normally the smoothest one, see the \emph{maximal-depth solution problem} Section~\ref{sec:maxDepthSol}.

ODEs and PDEs we handle in reconstruction problems are strongly linked with the camera projection model used to compound the images. Perspective camera model are the most frequent camera projection model because of the practical applications.
Pinhole cameras capture scenes projecting light rays based on this model.
The nature of the projection constrains the analysis of the existence and the uniqueness of the reconstructions we can make from the image captured. Orthographic projection model is other camera projection model very studied in computer vision because of its simplicity and the well behavior in the sense of uniqueness.
Against the problem of reconstruction of planar-orthographic curves from its modulus, it derives to an ODE whose solutions are all displaced versions of one from the $x$-axis and/or mirrored due to the \emph{concave-convex} nature of the equation.
The existence of only two local solution from the equivalent \emph{planar-orthographic equation} is guaranteed for both regular and critical points.
We want to answer if the \emph{planar-perspective equation} presents equivalent results.

The present article studies the existence and uniqueness of solutions of a specific quadratic first-order ODE that repeats in many of the previous mentioned reconstruction problems.
It is called \emph{planar-perspective equation} due to the nature of the ODE is closely connected with the planar curve reconstruction problem projected by the perspective model knowing its modulus.
Nevertheless, the conclusions which derive from it have strong repercussions in 3D reconstruction of curves and surfaces, specially in the appearance of multiple solutions.
The articles gives theoretical proofs of existence of solutions and study the uniqueness of the problem. It brings the following specific contributions.
\emph{i)} we established a mathematical framework to define the called \emph{planar-perspective equation} that is a quadratic first-order ODE related with the planar curve reconstruction problem from its modulus.
Within this framework, we will study the existence and the uniqueness of solutions of this specific ODE.
\emph{ii)} we show that in absence of \emph{initial conditions}, there is a dense set of solutions that fulfill the equation and consequently, multiple planar curves could be chosen as candidates.
\emph{iii)} we prove the existence of two local solution of the ODE adding a \emph{regular initial conditions} and the existence of a maximum of two analytical solutions adding a \emph{critical initial condition}.
The difference comes from the analysis that we develop for both different scenarios. This is our major contribution of the article. We remark the dissimilarities with respect the orthographic case.
Also, we give some formulas to extend the region where uniqueness theorem is guaranteed and to find the local bounds.
\emph{iv}) we study the called \emph{maximal curve} which contains all \emph{critical points} of solutions and impose an upper-bound for all of them.
\emph{v)} Finally, we define the problem of finding the \emph{maximal depth solution} which is the solution of the \emph{planar-perspective equation} with maximum depth.
The problem is common in the literature~\cite{Bartoli2015}.
Practical reconstruction algorithms, most of them based on the minimization of a energy function, return this solution in absence of \emph{initial conditions} due to the fact that it is also the smoothest one, as we will prove, and minimization methods try to regularize solutions.
We will prove that the \emph{maximal depth solution} is unique.

\section{Notation}
\label{sec:notation}
We use italic upper-case math calligraphy to define general sets as curves $\mathcal{C}$.
Parametrization of curves are represented by the pair $(I,X_\rho)$ where the first element $I$ represents the domain of the parametrization and the second one $X_\rho$ the function whose image is the curve and which takes $I$ as its domain.
The subscripts $\rho$ represents the \emph{depth function} on which it depends since we will works with perspective parametrization.
A brief study of different perspective parametrizations used in computer vision is detailed in Appendix~\ref{sec:appPersParam}.
Throughout the article, we will use the called polar perspective parametrization for representing curves due to the calculus simplification it brings, see Appendix~\ref{sec:appPersParam}.

We will work in the planar Euclidean space. The symbol $\lVert \cdot \rVert_2$ refers to the Euclidean norm. We use different symbols to refer to the derivative operator depending on the mathematical object involved.
We use a single quote $'$ to express the derivation of a parametrization $X'(\theta)$.
If we define a ODE we use the conventional symbol $\frac{d}{d\theta}$.
In the iterative process of generating Taylor series in Section~\ref{sec:singCond}, we use the following specific notation: $\rho_{i)}^j$ = ``\emph{i}th derivative of $\rho$ raised to the \emph{j}th power''.

\section{Problem Statement}
Let us start with the geometric interpretation of the problem, see Figure~\ref{fig:original1DSfT}.
\begin{figure}[h]
	\centering
	\includegraphics[width=0.8\textwidth]{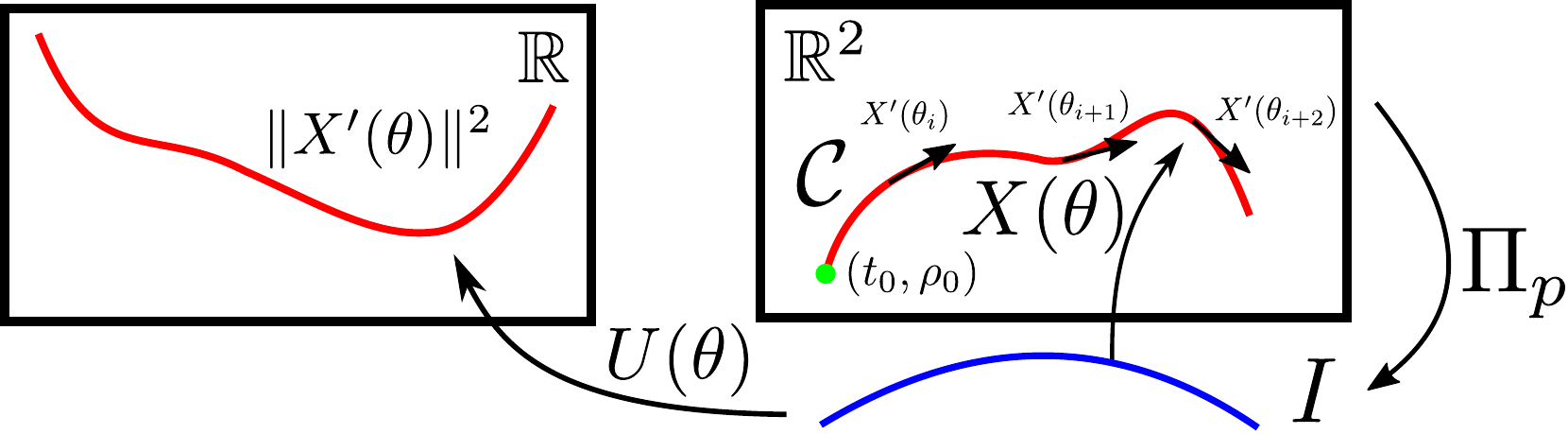}
	\caption{Geometric interpretation of the problem.}
	\label{fig:original1DSfT}
\end{figure}
Let $\mathcal{C}$ be a regular curve placed in the positive semi-plane of $\mathbb{R}^2$ parametrized by the \emph{polar perspective parametrization} $(I,X_\rho)$:
\begin{align}
X:I &\longmapsto \mathbb{R}\cross\mathbb{R}^+ \nonumber \\
\theta   &\longmapsto \rho(\theta)(\cos(\theta),\sin(\theta))
\label{eq:ppparam}
\end{align}
where the domain $I\subset[0,\pi]\subset\mathbb{R}$ is an interval and $\rho:I \longmapsto \mathbb{R}^+$ is called the \emph{depth function} associated with this parametrization, see Appendix~\ref{sec:appPersParam}.

Calculating the derivative of this parametrization we obtain the following expression of the velocity vector:
\begin{equation}
\label{eq:velocityVector}
X'(\theta) = \left(\frac{d\rho}{d\theta}\cos(\theta) - \rho\sin(\theta), \frac{d\rho}{d\theta}sin(\theta) + \rho\cos(\theta)\right)
\end{equation}
Computing the square of its modulus which we called $U$, we address to the next quadratic first-order ODE:
\begin{equation}
\lVert X'(\theta) \rVert^2_2 = \left(\frac{d\rho(\theta)}{d\theta}\right)^2 + \rho^2(\theta) = U(\theta)
\end{equation}
Assuming known this function $U(\theta)=\lVert X'(\theta) \rVert^2_2$, we wonder if we can reconstruct the curve $\mathcal{C}$ with only this information or maybe adding some constrain as a point of the curve.
Observe that the problem of reconstruction of a planar-curve from its modulus become in solving an specific quadratic first-order ODE.
We can keep in mind this duality between the geometric objects curves and the solutions of the previous ODE.

Putting the geometrical interpretation to one side, we formally describe the problem as follows.
Let $U:I \longmapsto \mathbb{R}^+$ be a function of class $\mathcal{C}^1(I,\mathbb{R}^+)$ defined in the interval $I$ except for a finite number of points $\mathcal{A}\subset I$ where the function is of class $\mathcal{C}^\infty(\mathcal{A},\mathbb{R})$ and where the first-order derivative of the $\rho$ function is null.
The function $U(\theta)$ has the geometric interpretation of being the square of the modulus of the velocity vector~\eqref{eq:velocityVector} of the curve $\mathcal{C}$ calculated through the parametrization $(I,X_\rho)$ as we mentioned before. It means that $U(\theta) = \lVert X_{\rho}'(\theta)\rVert^2_2$ and we assume that it is known.
Consider the next quadratic first-order ordinary differential equation (\emph{ODE}):
\begin{equation}
\left(\frac{d\rho(\theta)}{d\theta}\right)^2 + \rho^2(\theta) = U(\theta),
\label{eq:1}
\end{equation}
where $\rho(\theta)$ is the unknown function that we have to find which is positive or null $\rho(\theta)\geq0$.
The variable $\theta$ is the independent variable.
We want to answer firstly if the ODE~\eqref{eq:1} is enough to define a unique solution.
Observe that as two variables are involved $\rho$ and $\frac{d\rho}{d\theta}$ and there is only one equation that related them, there will be multiple solutions to the ODE~\eqref{eq:1}.
Figure~\eqref{fig:1DSfTrecta} illustrates that in absence of any other constrains there are multiple solutions that fulfills the ODE. 

We ask now for the Initial Value Problem (\emph{IVP})~\eqref{eq:pvi1} (also called the Cauchy problem) composed by the equation~\eqref{eq:1} (also called \emph{planar-perspective reconstruction equation} or simply \emph{planar-perspective equation}) with the \emph{Initial Condition} (IC) $\rho(\theta_0) = \rho_0$ in a neighborhood $\theta_0\in J \subset I$.
\begin{equation}
\left\lbrace
\begin{array}{l}
\left(\frac{d\rho(\theta)}{d\theta}\right)^2 + \rho^2(\theta) = U(\theta)\\
\rho(\theta_0) = \rho_0,
\end{array}
\right.
\label{eq:pvi1}
\end{equation}
Constraining to a specific IC, we want to answer now how many solutions there will fulfill the IVP~\eqref{eq:pvi1}.
Let start to solve the problem considering the next two examples. They illustrates that in absence of IC, multiple solutions appear and introduce the hypothesis that will prove. This is the existence of two analytical solutions given an IC.

Before entering into a formal proof, the next examples permits to visualize the concepts we manage along the article.
Besides, it act as prove of multiple solutions of ODE~\eqref{eq:1} appear if no initial conditions constrains the problem.


\section{Examples}
Figure~\ref{fig:1DSfTrecta} shows that in absence of an initial condition a dense set of multiple solutions may exist.
Green and cian curves are perspective-parametric curves which \emph{depth function} fulfills \eqref{eq:1} that differ in the \emph{initial condition} represented with magenta circles.
Once we fix an \emph{initial condition} the example shows empirical results that there are only two possible curves which pass through the initial condition.
\begin{figure}[h]
	\centering
	\includegraphics[width=0.8\textwidth]{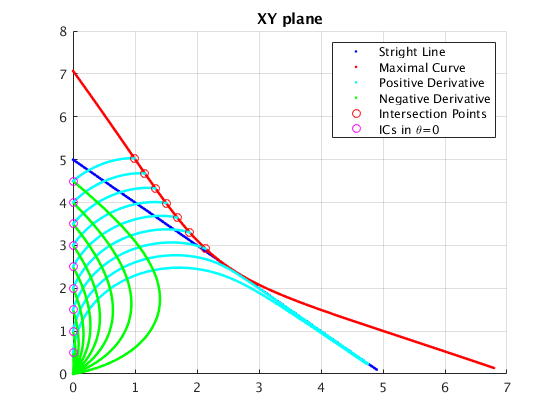}
	\caption{SfT of a straight line.
		Straight line is represented in blue color. The maximal curve is showed in red color.
		Different ICs yields to different curves one for each sign of the derivative.
		The \emph{depth function} of each curves obey the ODE~\eqref{eq:1}.}
	\label{fig:1DSfTrecta}
\end{figure}
The blue straight line represents the curve we want to reconstruct whose explicit equation is $-x + 5 = 0$. We only know the square modulus of the velocity vector $\vec{v}$.
Magenta points represent different regular ICs in $\theta=0$. There are two possible curves that passes throw each of them, one with positive derivative and one with negative derivative.
These two curves are represented with cian and green colors respectively.
The curve whose increase monotonically in depth (cian curves) may intersect or not with the the red curve called the \emph{maximal curve} formally defined in section~\ref{sec:maximalCurve}. Briefly, this curve is obtained by vanishing the term $\frac{d\rho}{dt} = 0$ in the original equation~\eqref{eq:1} and solving for $\rho$.
It has the property of containing all the points of solutions $\rho$ of the equation~\eqref{eq:1} with null first-derivative, that means the critical points of the ODE~\eqref{eq:1}.
All of the curves fulfills the ODE~\eqref{eq:1} and they are projected into the same points at least before intersecting the \emph{maximal curve}.
We need more information that only fulfilling the ODE to recover our desired blue straight line.
Adding an IC that belongs to the blue straight line is required.

Figure~\ref{fig:1DSfTrectaNullCI} showed the reconstruction of the previous straight line $-x + 5 = 0$.
Observe that given the initial point $(0,5)$ as an IC, the decreasing curve (green) match with the blue curve we want to reconstruct.
We have needed at least one point of the blue straight line to reconstruct it locally.
Observe that as long as we descend through the curve, it tangentially intersects  the red \emph{maximal curve} in a point $p$ with null first-derivative (respect to its depth function)(we will call it a critical point).
This kind of point has the normal of the curve parallel to the optical ray (the line that joins $p$ with the origin).
Critical points have the properties of create branches of solutions as we can see in the figure breaking the uniqueness of the problem.
At this point the curve splits in two, one piece continue decreasing its depth (green), an the other start to increase it (cian).
The cian and green curves are the two solutions that we obtain with different regular ICs that share a piece of blue straight line.
We also observe that there exist only $2$ possible analytic solutions or $4$ piecewise functions $C^1$ function if we mix all the branches in the critical point.

\begin{figure}[h]
	\centering
	\includegraphics[width=0.8\textwidth]{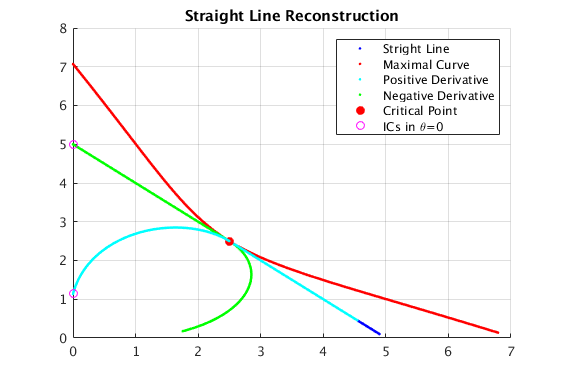}
	\caption{SfT of a straight line in a null initial condition.}
	\label{fig:1DSfTrectaNullCI}
\end{figure}

\begin{remark}
	It is possible to have curves without any non-null initial condition. For instance, the red straight line in figure~\ref{fig:1DSfTrectaNullCI} with a positive grow has non-null initial conditions.
\end{remark}
\section{Main Result}
\label{sec:mainResult}

We want to prove that the \emph{IVP}~\eqref{eq:pvi1} composed by the equation~\eqref{eq:1} and any IC $(\theta_0,\rho_0)$, inside the domain of the equation, has a maximum of two possible solutions $\rho_1$ and $\rho_2$ if the IC is \emph{regular} and a maximum of two analytical solutions if the IC is critical.
In other words, speaking in geometric terms, given a regular point in the plane $p=(\theta_0,\rho_0)$ compatible with the equation~\eqref{eq:1}, there are a maximum of two curves that fulfill the equation~\eqref{eq:1} and pass through the point $p=(\theta_0,\rho_0)$ and given a singular point, a maximum of two analytical curves. In the case critical ICs, we look for analytic solutions.

We divide the proof in two parts.
The first one assumes that the IC is \emph{regular} that means that the IC has non-null first-order derivative. The second one assumes that IC is \emph{critical} or with a null first-order derivative.

\subsection{Case 1: Regular ICs}
\label{sec:regularICs}

Solving the equation~\eqref{eq:1} for $\frac{d\rho}{d\theta}$, we obtains the next two explicit ODEs that differ from the sign.
\begin{equation}
\frac{d\rho}{d\theta} = \pm\sqrt{U(\theta)-\rho^2},
\label{eq:2}
\end{equation}
Therefore, we have the next two explicit IVP with the same initial condition, one with the positive sign of the square root and the negative one.
\begin{table}[htb!]
	\centering
\begin{tabular}{|lc|lc|}
	\hline
	1) &
	$
		\left\lbrace
		\begin{array}{l}
		\frac{d\rho}{d\theta} = +\sqrt{U(\theta)-\rho^2}\\
		\rho(\theta_0) = \rho_0,
		\end{array}
		\right.
	$
	& 2) &
	$
		\left\lbrace
		\begin{array}{l}
		\frac{d\rho}{d\theta} = -\sqrt{U(\theta)-\rho^2}\\
		\rho(\theta_0) = \rho_0,,
		\end{array}
		\right.
	$\\
	\hline
\end{tabular}
\caption{The two explicit Initial Value Problems that match with the ODE~\eqref{eq:2}. The first one has positive sign of the derivative. The second one has the negative}
\label{tab:pvi1}
\end{table}
By the hypothesis, we assume that $U(\theta_0) > \rho_0^2$ that implies that the derivatives of $\rho$ function is not vanished in IC, see equation~\eqref{eq:2}. Therefore, we obtain two different values of the derivative of $\rho$ in $\theta_0$ that we call $\frac{d\rho(\theta_0)}{d\theta}=\alpha$ and $\frac{d\rho(t_0)}{dt}=-\alpha$ with $\alpha>0$. The value of the derivative in the IC is not null and both values have equal modulus and different sign.

Using the \emph{Picard-Lindelöf} theorem for each IVP of the Table~\ref{tab:pvi1} with $\rho(\theta_0)=\rho_0$ as IC guarantees the existence and the uniqueness of both solutions $\rho_1(\theta)$ and $\rho_2(\theta)$ respectively at least in a neighbourhood around the IC.
The solution of the first IVP $\rho_1(\theta)$ grows in this neighbourhood and the second one $\rho_2(\theta)$ decreases.
Proposition~\ref{th:SufConPL} proves that the \emph{Picard-Lindelöf} conditions to call the theorem are satisfied.
\begin{proposition}
	Considering the right side of the explicit ODE~\eqref{eq:2} as a function of the variables $\theta$ and $\rho$:
	\begin{equation}
		f(\theta,\rho) = \pm\sqrt{U(\theta)-\rho^2}
	\end{equation}
	\label{th:SufConPL}
	The function $f(\theta,\rho)$ is Lipschitz continuous in the second variable in an interval $J$ around the IC.
\end{proposition}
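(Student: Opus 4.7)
The idea is to exploit the regularity hypothesis $U(\theta_0)>\rho_0^2$ to confine the analysis to a small rectangle around the initial condition on which the argument of the square root is bounded away from zero, and then to estimate the partial derivative $\partial f/\partial\rho$ to deduce the Lipschitz bound via the mean value theorem.

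First, I would use continuity of $U$ at $\theta_0$ to produce constants $\delta>0$, $r>0$, and an interval $J=[\theta_0-a,\theta_0+a]\subset I$ such that the closed rectangle
\begin{equation*}
R = J\times[\rho_0-r,\rho_0+r]
\end{equation*}
satisfies $U(\theta)-\rho^2\geq\delta$ for every $(\theta,\rho)\in R$. This is possible because the map $(\theta,\rho)\mapsto U(\theta)-\rho^2$ is continuous and strictly positive at $(\theta_0,\rho_0)$ by the regularity hypothesis stated just before the proposition.

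Next, on the rectangle $R$ the function $f(\theta,\rho)=\pm\sqrt{U(\theta)-\rho^2}$ is well defined and continuously differentiable in $\rho$, with
\begin{equation*}
\frac{\partial f}{\partial\rho}(\theta,\rho)=\mp\frac{\rho}{\sqrt{U(\theta)-\rho^2}}.
\end{equation*}
Since the denominator is bounded below by $\sqrt{\delta}$ and the numerator is bounded above by $|\rho_0|+r$ on $R$, this partial derivative is uniformly bounded by some constant $L=(|\rho_0|+r)/\sqrt{\delta}$.

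Finally, for any fixed $\theta\in J$ and any two values $\rho_1,\rho_2\in[\rho_0-r,\rho_0+r]$, the mean value theorem applied to the one-variable function $\rho\mapsto f(\theta,\rho)$ on the segment joining $\rho_1$ and $\rho_2$ yields
\begin{equation*}
\bigl|f(\theta,\rho_1)-f(\theta,\rho_2)\bigr|\leq L\,|\rho_1-\rho_2|,
\end{equation*}
which is the Lipschitz continuity in the second variable claimed by the proposition, enabling the application of the Picard--Lindel\"of theorem to each of the two IVPs of Table~\ref{tab:pvi1}.

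The only subtle point is obtaining the uniform lower bound $U(\theta)-\rho^2\geq\delta$ on a full rectangle rather than only at the initial condition; this is really the single place where the regularity assumption $U(\theta_0)>\rho_0^2$ is used, and everything else is a routine mean value estimate. I do not expect any serious obstacle beyond keeping $r$ and $a$ small enough.
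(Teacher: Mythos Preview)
Your proposal is correct and follows essentially the same approach as the paper: compute the partial derivative $\partial f/\partial\rho=\mp\rho/\sqrt{U(\theta)-\rho^2}$, use the regularity hypothesis $U(\theta_0)>\rho_0^2$ to ensure it is well defined and continuous near the initial condition, and deduce Lipschitz continuity from boundedness of this derivative. The only difference is cosmetic: the paper invokes the Weierstrass (extreme value) theorem on a closed subinterval to obtain the bound, whereas you construct an explicit rectangle with a quantitative lower bound $\delta$ on the radicand and an explicit Lipschitz constant $L=(|\rho_0|+r)/\sqrt{\delta}$, then finish with the mean value theorem; your version is slightly more careful in that it makes the two-dimensional neighbourhood explicit, but the underlying argument is identical.
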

\begin{proof}
	The partial derivative $\frac{\partial f}{\partial \rho}$ exist around an interval $J$ of the IC due to the hypothesis $U(\theta_0) > \rho_0^2$ and it has the next expression.
	\begin{equation}
		\frac{\partial f}{\partial \rho} = \frac{\mp\rho}{\sqrt{U(\theta)-\rho^2}}
	\end{equation}
	The partial derivative $\frac{\partial f}{\partial \rho}$ is continuous in the interval $J$. Calling \emph{Weierstrass theorem} in any closed interval $I\subset J$ proves that $\frac{\partial f}{\partial \rho}$ is bounded and consequently is \emph{Lipschitz continuous} in $J$.
\end{proof}
The interval $J$ where the \emph{Lipschitz continuous} property is satisfied can be spread out until a critical point appeared. Consequently local solutions are unique in the interval $J$ which finish at critical points.
Solutions of the two explicit IVP shows in Table~\ref{tab:pvi1} are monotonic as its derivatives verify $\frac{d\rho}{dt} \ge 0$ and $\frac{d\rho}{dt} \le 0$, respectively. The first one gives solutions monotonically increasing.
The second one monotonically decreasing. This means that until solutions reach critical points, solutions always grows or always decrease, see Figure~\ref{fig:monotonicSol}.
\begin{figure}[h]
	\centering
	\includegraphics[width=0.7\textwidth]{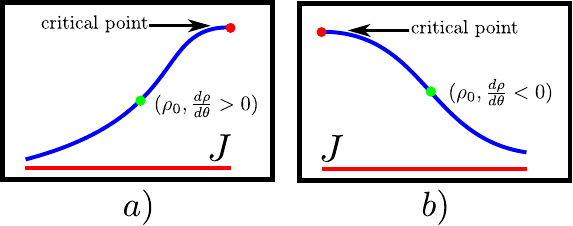}
	\caption{Monotonic behaviour of solutions of explicit IVP of the Table~\ref{tab:pvi1} until reach a critical point.}
	\label{fig:monotonicSol}
\end{figure}

Proposition~\ref{th:SufConPL} involves satisfactory consequences:
\begin{remark}
	If ODE~\eqref{eq:1} does not present critical points in its domain $I$ then there are only two possible global solutions of the IVP~\eqref{eq:pvi1}, one monotonic increasing and one monotonic decreasing. The existence and uniqueness local domain $J$ of the Proposition~\ref{th:SufConPL} can be expanded to take up the global domain $I$.
\end{remark}
\begin{remark}
	Focusing on an interval of the global domain  $J\subset I$ where Proposition~\ref{th:SufConPL} is satisfied, a consequence of the proposition is that local solutions of the IVP\eqref{eq:pvi1} in $J$ never intersects between them. Only ICs that belong to the same local solution could be do it.
\end{remark}

\begin{remark}
	If  critical points exist in the definition domain $I$, multiple solutions could be appear connecting local solutions from two consecutive local domains $J_1$ and $J_2$ that share the critical point and match in the the first-order derivative $\rho_1(\theta_c) = \rho_2(\theta_c)$ and $\frac{d\rho_1(\theta_c)}{d\theta} = \frac{d\rho_2(\theta_c)}{d\theta} = 0$.
\end{remark}

Proposition~\ref{th:SufConPL} proves the importance that critical points have in the study of the existence and the uniqueness of solutions.
Finding all possible critical points is required in order to recognize all possible solutions, or where branches arise.
The \emph{maximal curve} solve the problem of discovering al critical points as we will see in Section~\ref{sec:maximalCurve}.

Now, we wonder what happen when the initial condition is a critical point, that means, a point where the first-order derivative vanishes.
Given a critical point, there exist a maximum of two analytic curves that pass through this point? Notice that if the initial condition $\rho(\theta_0)=\rho_0$ is a critical point, the equation~\eqref{eq:1} becomes algebraic.
Unfortunately, we cannot use the \emph{Picard-Lindelöf} theorem to prove that there exist a maximum  two analytic solutions as critical points does not satisfied the \emph{Lipschitz continuous} property, see Proposition~\ref{th:SufConPL}.
Instead of it, we follow the next steps.

\subsection{Case 2: Critical ICs}
\label{sec:singCond}

Firstly, we differentiate both sides of equation~\eqref{eq:1}.
We use the notation $\rho_{i)}^j$ to mean the \emph{i}th derivative of $\rho$ raised to the \emph{j}th power.
\begin{equation}
	2\left(\rho_{1)}\rho_{2)} + \rho_{0)}\rho_{1)}\right) = U_{1)}
\label{eq:3}
\end{equation}
This equation is second-order.
Besides, we observe that the highest-order of the equation is not quadratic.
Therefore, any bifurcation appears.
Since we assume that the first-order derivative vanishes at the initial condition, equation~\eqref{eq:3} is an identity and necessary it must fulfill $U_{1)}=0$.
On the contrary, the equation would not have a solution.
We cannot solve the IVP with this equation and the initial condition $(t_0,\rho_0)$ with null derivative.
We will exploit this property when we define the \emph{maximal curve} in Section~\ref{sec:maximalCurve}, a useful curve to find all possible critical ICs.
We repeat the process and differentiate both sides of the previous equation again:
\begin{equation}
2\left(\rho_{2)}^2+\rho_{1)}\rho_{3)} + \rho_{1)}^2+\rho_{0)}\rho_{2)}\right)= U_{2)}
\label{eq:4}
\end{equation}
This equation is third-order.
The coefficient of the highest-order term is the same in both equations and equals to $2\rho_{1)}$.
As we assume that the first-order derivative in the initial condition is null, equation~\eqref{eq:4} becomes a quadratic in $t=t_0$:
\begin{equation}
2\left(\rho_{2)}^2 + \rho_{0)}\rho_{2)}\right)= U_{2)}
\label{eq:5}
\end{equation}
Equation~\eqref{eq:5} produces a bifurcation or a ramification. Solving the equation in $\rho_{2)}$, we obtain two different values for the second-order derivatives in the critical IC due to its quadratic character. 
\begin{equation}
\beta_{l} = \frac{-\rho_{0)}\pm\sqrt{\rho_{0)}^2+2 U_{2)}}}{2} \quad l\in\{1,2\}
\end{equation}
We will refer to this pair of values of the second-order derivative as $\beta_1$ and $\beta_2$.
In this point, we assume that the discriminant $\Delta=\rho_{0)}^2 + 2U_{2)}\geq0$ in $t=t_0$ to obtain real roots. This condition forces that $U_{2)}$ lies on the interval next interval:
\begin{equation}
U_{2)}\in\left[\frac{-\rho_{0)}^2}{2},\infty\right)
\end{equation}
The value of the discriminant $\Delta$ determines the sign of the second-order derivative $\rho_{2)}$.	
\begin{equation}
\left\lbrace
\begin{array}{l}
(\beta_1,\beta_2)\in (-\infty,-\rho_0]\cross[0,\infty) \qquad U_{2)} \ge 0\\
(\beta_1,\beta_2)\in (-\rho_0,-\frac{2\rho_{0)}}{3}]\cross[-\frac{\rho_{0)}}{3},0) \qquad U_{2)} \in [\frac{-4\rho_{0)}^2}{9},0)\\
(\beta_1,\beta_2)\in (-\frac{2\rho_{0)}}{3},\frac{-\rho_{0)}^2}{2}]\cross[-\frac{\rho_{0)}}{2},-\frac{\rho_{0)}}{3}) \qquad U_{2)} \in [\frac{-\rho_{0)}^2}{2},\frac{-4\rho_{0)}^2}{9}),
\end{array}
\right.
\label{eq:signDerivative}
\end{equation}
 Again, we repeat the process and differentiate both sides of the equation:
\begin{equation}
2\left(3\rho_{2)}\rho_{3)} + \rho_{1)}\rho_{4)} + 3\rho_{1)}\rho_{2)} + \rho_{0)}\rho_{3)}\right) = U_{3)}
\label{eq:6}
\end{equation}
This equation is fourth-order. At this point, we prove the existence of a maximum of only two possible analytic solutions.
We can calculate the value of the third-order derivative for each value of the critical IC (there are two because of the bifurcation) $(\rho_0,0,\beta_1)$ and $(\rho_0,0,\beta_2)$. Vanishing the first-order derivative, we obtain the next third-order equation:
\begin{equation}
2\left(3\rho_{2)}\rho_{3)} + \rho_{0)}\rho_{3)}\right) = U_{3)}
\label{eq:7}
\end{equation}
For each IC $(\rho_0,0,\beta_1)$ y $(\rho_0,0,\beta_2)$ we obtain a unique value of the third-order derivative (due to the fact that the coefficient of the highest-order derivative in the consecutive iterations is linear, in this case $\rho_{3)}$, at least in the non-degenerate cases).
We will prove at the end of the article.
If we follow this procedure, we can build a Taylor series for each initial condition and calculates $a=(\rho_0,0,\beta_1,...)$ and $b=(\rho_0,0,\beta_2,...)$ in $\theta_0$.
Assuming that the function $U(\theta)$ is analytic at the critical points, its Taylor series converges in a neighborhood of $\theta_0\in K$. As we are looking for the analytical solutions of the ODE in a neighborhood around the IC and the operations that involves the equation (squares, sums, derivatives) are friendly with the analytic property of the functions, then, the Taylor series of the function $U(\theta)$ is compatible with the existence of analytic solutions of the ODE~\eqref{eq:1}.
That means that given an analytic solution of the ODE~\eqref{eq:1} necessary $U(\theta)$ will be analytic.
Solutions $\rho_1(\theta)$ and $\rho_2(\theta)$ that generate from the method are analytic assuming convergence.

\begin{equation}
\begin{split}
\rho_1(\theta) = \sum_{i=0}^{\infty} \frac{a_i}{i!}(\theta-\theta_0)^i\quad t\in K\\
\rho_2(\theta) = \sum_{i=0}^{\infty} \frac{b_i}{i!}(\theta-\theta_0)^i\quad t\in K
\end{split}
\end{equation}

We now prove the existence of a maximum of only two solutions extending the interval $K$ as long as we reach another critical point.
As the obtained function is analytic in $\theta_0$, the Taylor series converges in a neighborhood $K$ around $\theta_0$.
For any point $\theta\in K,\theta\neq \theta_0$, the first-order derivative is not null and, consequently, we are in the conditions of the regular IC, see Section~\ref{sec:regularICs}.
We can apply the \emph{Picard-Lindellöf} theorem to guarantee the existence of only two possible analytic solutions that passes through the critical point that belong to the new analytical.

There exist a particular degenerate case where all the coefficients of the Taylor series are null except $\rho_0$.
In this case, the function $\rho(\theta)=\rho_0$ is a constant. It is linked with the circle whose radius is constant.

Th next points are important to generalize the proof to all order of the derivatives.
In the previous proof, we assumed that the bifurcation appeared in the second-order derivative, but, in general, the bifurcation may appear in the \emph{n}th-order derivative.
\begin{enumerate}
	\item Whatever the number of times we differentiate the equation, the coefficient of the highest-order of the derivative is always $2\rho_{1)}$. We prove it using the \emph{general Leibniz rule} that has the next expression with our notation:
	\begin{equation}
		(f\cdot g)_{n)} = \sum_{k=0}^n {n \choose k} f_{k)}g_{n-k)}
	\end{equation}
	Substituting $f=g=\rho_{1)}$, we find that:
	\begin{equation}
		x_{n)}=(\rho_{1)}\cdot \rho_{1)})_{n)} = \sum_{k=0}^{n}  {n \choose k} \rho_{k+1)}\rho_{n-k+1)}
		\label{eq:x}
	\end{equation}
	We do the same for the non-derivative term:
	\begin{equation}
		y_{n)}=(\rho \cdot \rho)_{n)} = \sum_{k=0}^{n}  {n \choose k} \rho_{k)}\rho_{n-k)}
	\end{equation}
	Now, equation~\eqref{eq:1} can be rewritten as follow:
	\begin{equation}
		x + y = U
		\label{eq:8}
	\end{equation}
	The successive derivatives can be written as:
	\begin{equation}
		x_{n)} + y_{n)} = U_{n)}\qquad n\in\mathbb{N}
		\label{eq:9}
	\end{equation}
	Fixing the iteration $n$, we always find that the highest-order derivative ($(n+1)$-order) is obtained from the terms $k=0$ and $k=n$ of equation~\eqref{eq:x}, leading to:
	\begin{equation}
	\left({n \choose 0}\rho_{1)}\rho_{n+1)} + {n \choose n}\rho_{n+1)}\rho_{1)}\right) = 2\rho_{_1)}\rho_{n+1)}
	\end{equation}
	
	Consequently, this term vanishes at the critical points, reducing the order of the differential equation by one.
	
	\item We now show some expressions for $x_{n)}$, $y_{n)}$ from equation~\eqref{eq:9}.

	$x_{n)}$:
	\begin{equation}
	    \begin{array}{l|l}
        n = 0 & \rho_{1)}^2\\
		n = 1 & 2{1\choose 0}\rho_{1)}\rho_{2)}\\
		n = 2 & 2{2\choose 0}\rho_{1)}\rho_{3)} + {2\choose 1}\rho_{2)}^2\\
		n = 3 & 2{3\choose 0}\rho_{1)}\rho_{4)} + 2{3\choose 1}\rho_{2)}\rho_{3)}\\
		n = 4 & 2{4\choose 0}\rho_{1)}\rho_{5)} + 2{4\choose 1}\rho_{2)}\rho_{4)} + {4\choose 2}\rho_{3)}^2\\
		n = 5 & 2{5\choose 0}\rho_{1)}\rho_{6)} + 2{5\choose 1}\rho_{2)}\rho_{5)} + 2{5\choose 2}\rho_{3)}\rho_{4)}\\
		n = 6 & 2{6\choose 0}\rho_{1)}\rho_{7)} + 2{6\choose 1}\rho_{2)}\rho_{6)} + 2{6\choose 2}\rho_{3)}\rho_{5)} + {6\choose 3}\rho_{4)}^2\\
		\vdots & \vdots\\
		n = i-1 & 2{i-1\choose 0}\rho_{1)}\rho_{i)} + 2{i-1\choose 1}\rho_{2)}\rho_{i-1)} +\cdots + 2{i-1\choose\frac{i-4}{2}}\rho_{\frac{i-2}{2})}\rho_{\frac{i+4}{2})} + 2{i-1\choose\frac{i-2}{2}}\rho_{\frac{i}{2})}\rho_{\frac{i+2}{2})}\\
		n = i & 2{i\choose 0}\rho_{1)}\rho_{i+1)} + 2{i\choose 1}\rho_{2)}\rho_{i)} + \cdots + 2{i\choose\frac{i-2}{2}}\rho_{\frac{i}{2})}\rho_{\frac{i+4}{2})} + {i\choose\frac{i}{2}}\rho_{\frac{i+2}{2})}^2\\
		n = i+1 & 2{i+1\choose 0}\rho_{1)}\rho_{i+2)} + 2{i+1\choose 1}\rho_{2)}\rho_{i+1)} + \cdots + 2{i+1\choose\frac{i-2}{2}}\rho_{\frac{i}{2})}\rho_{\frac{i+6}{2})} + 2{i+1\choose\frac{i}{2}}\rho_{\frac{i+2}{2})}\rho_{\frac{i+4}{2})}\\
		\vdots & \vdots
		\end{array}
	\end{equation}
	$y_{n)}$:
	\begin{equation}
	\begin{array}{l|l}
		n = 0 & \rho_{0)}^2\\
		n = 1 & 2{1\choose 0}\rho_{0)}\rho_{1)}\\
		n = 2 & 2{2\choose 0}\rho_{0)}\rho_{2)} + {2\choose 1}\rho_{1)}^2\\
		n = 3 & 2{3\choose 0}\rho_{0)}\rho_{3)} + 2{3\choose 1}\rho_{1)}\rho_{2)}\\
		n = 4 & 2{4\choose 0}\rho_{0)}\rho_{4)} + 2{4\choose 1}\rho_{1)}\rho_{3)} + {4\choose 2}\rho_{2)}^2\\
		n = 5 & 2{5\choose 0}\rho_{0)}\rho_{5)} + 2{5\choose 1}\rho_{1)}\rho_{4)} + 2{5\choose 2}\rho_{2)}\rho_{3)}\\
		n = 6 & 2{6\choose 0}\rho_{0)}\rho_{6)} + 2{6\choose 1}\rho_{1)}\rho_{5)} + 2{6\choose 2}\rho_{2)}\rho_{4)} + {6\choose 3}\rho_{3)}^2\\
		\vdots & \vdots\\
		n = i-1 & 2{i-1\choose 0}\rho_{0)}\rho_{i-1)} + 2{i-1\choose 1}\rho_{1)}\rho_{i-2)} +\cdots + 2{i-1\choose\frac{i-4}{2}}\rho_{\frac{i-4}{2})}\rho_{\frac{i+2}{2})} + 2{i-1\choose\frac{i-2}{2}}\rho_{\frac{i-2}{2})}\rho_{\frac{i}{2})}\\
		n = i & 2{i\choose 0}\rho_{0)}\rho_{i)} + 2{i\choose 1}\rho_{1)}\rho_{i-1)} + \cdots + 2{i\choose\frac{i}{2}}\rho_{\frac{i-2}{2})}\rho_{\frac{i+2}{2})} + {i\choose\frac{i}{2}}\rho_{\frac{i}{2})}^2\\
		n = i+1 & 2{i+1\choose 0}\rho_{0)}\rho_{i+1)} + 2{i+1\choose 1}\rho_{1)}\rho_{i)} + \cdots + 2{i+1\choose\frac{i-2}{2}}\rho_{\frac{i-2}{2})}\rho_{\frac{i+4}{2})} + 2{i+1\choose\frac{i}{2}}\rho_{\frac{i}{2})}\rho_{\frac{i+2}{2})}\\
		\vdots & \vdots
	\end{array}
	\end{equation}
	We now show some iterations:
	\begin{equation}
	\begin{array}{l|l}
	n = 1 & x_{1)} + y_{1)} = 2{1\choose 0}\rho_{1)}\rho_{2)} + 2{1\choose 0}\rho_{0)}\rho_{1)} + = U_{1)}\\
	n = 2 & x_{2)} + y_{2)} = 2{2\choose 0}\rho_{1)}\rho_{3)} + {2\choose 1}\rho_{2)}^2 + 2{2\choose 0}\rho_{0)}\rho_{2)} + {2\choose 1}\rho_{1)}^2= U_{2)}\\
	n = 3 & x_{3)} + y_{3)} = 2{3\choose 0}\rho_{1)}\rho_{4)} + 2{3\choose 1}\rho_{2)}\rho_{3)} + 2{3\choose 0}\rho_{0)}\rho_{3)} + 2{3\choose 1}\rho_{1)}\rho_{2)} = U_{3)}\\
	n = 4 & x_{4)} + y_{4)} = 2{4\choose 0}\rho_{1)}\rho_{5)} + 2{4\choose 1}\rho_{2)}\rho_{4)} + {4\choose 2}\rho_{3)}^2 + 2{4\choose 0}\rho_{0)}\rho_{4)} + 2{4\choose 1}\rho_{1)}\rho_{3)} + {4\choose 2}\rho_{2)}^2 = U_{4)}\\
	n = 5 & x_{5)} + y_{5)} = 2{5\choose 0}\rho_{1)}\rho_{6)} + 2{5\choose 1}\rho_{2)}\rho_{5)} + 2{5\choose 2}\rho_{3)}\rho_{4)} + 2{5\choose 0}\rho_{0)}\rho_{5)} + 2{5\choose 1}\rho_{1)}\rho_{4)} + 2{5\choose 2}\rho_{2)}\rho_{3)} = U_{5)}\\
	n = 6 & x_{6)} + y_{6)} = 2{6\choose 0}\rho_{1)}\rho_{7)} +\cdots+ 2{6\choose 2}\rho_{3)}\rho_{5)} + {6\choose 3}\rho_{4)}^2 + 2{6\choose 0}\rho_{0)}\rho_{6)} +\cdots+ 2{6\choose 2}\rho_{2)}\rho_{4)} + {6\choose 3}\rho_{3)}^2= U_{6)}\\
	\vdots & \vdots\\
	n = i-1 & x_{i-1)} + y_{i-1)} = 2{i-1\choose 0}\rho_{1)}\rho_{i)} +\cdots+ 2{i-1\choose\frac{i-2}{2}}\rho_{\frac{i}{2})}\rho_{\frac{i+2}{2})} + 2{i-1\choose 0}\rho_{1)}\rho_{i)} +\cdots+ 2{i-1\choose\frac{i-2}{2}}\rho_{\frac{i}{2})}\rho_{\frac{i+2}{2})}= U_{i-1)}\\
    n = i & x_{i)} + y_{i)} = 2{i\choose 0}\rho_{1)}\rho_{i+1)} +\cdots+ {i\choose\frac{i}{2}}\rho_{\frac{i+2}{2})}^2 + 2{i\choose 0}\rho_{0)}\rho_{i)} +\cdots+ {i\choose\frac{i}{2}}\rho_{\frac{i}{2})}^2= U_{i)}\\
	n = i+1 & x_{i+1)} + y_{i+1)} = 2{i+1\choose 0}\rho_{1)}\rho_{i+2)} +\cdots+ 2{i+1\choose\frac{i}{2}}\rho_{\frac{i+2}{2})}\rho_{\frac{i+4}{2})} + 2{i+1\choose 0}\rho_{0)}\rho_{i+1)} +\cdots+ 2{i+1\choose\frac{i}{2}}\rho_{\frac{i}{2})}\rho_{\frac{i+2}{2})}= U_{i+1)}\\
	\vdots & \vdots
	\end{array}
	\end{equation}
	
	\item The first time that we differentiate equation~\eqref{eq:1} and evaluate the critical IC $(\theta_0,\rho_0)$, we obtain an identity and cannot compute the second-order derivative.
	If we differentiate again, we obtain the second-order equation~\eqref{eq:5} in $\rho_{2)}$.
	We can obtain two different real roots or one double root.
	Differentiating again and substituting the value $\rho_{1)}=0$ yields equation~\eqref{eq:7}.
	It is possible to solve for $\rho_{3)}$ if we know all the previous derivatives.
	The only constraint is that the coefficient of this derivative has to be non-null $2(\rho_{0)} + 3\rho_{2)}) \neq 0$ in order to resolve it.
	But, there is a value of $\rho^{(2)}$ that vanishes the coefficient of this derivative and obeys $2(\rho_{0)} + 3\rho_{2)})=0$.
	We can prove that after the second iteration, the highest-order of the resultant equation $\rho_{i)}$ after substituting $\rho_{1)}=0$ has the next coefficient $\alpha_i(i)$:
	\begin{equation}
		\alpha_{i}(i) = 2(\rho_{0)}+(i+1)\rho_{2)}) \qquad i\geq2
	\end{equation}
	If this coefficient does not vanish for any iteration, then, we achieve all the coefficients of the Taylor series.
	We can also express the condition of non-vanishing as:
	\begin{equation}
		\rho_{2)} \neq \frac{-\rho_{0)}}{(i+1)} \qquad \forall i\geq2, i\in\mathbb{N}
	\end{equation}
	It is possible to give sufficient conditions that guarantees that the coefficient of the highest-order derivative does not vanish for any iteration. Lemma~\ref{lm:suffcientCondition} and~\ref{lm:suffcientCondition2} gives two of these possible sufficient conditions.
	\begin{lemma}
		\label{lm:suffcientCondition}
		If the value of $\rho_{2)}$ lies on the next subset:
		\begin{equation}
		\rho_{2)} \in (-\infty, \frac{-\rho^{0)}}{3}) \cup [0,\infty),
		\end{equation}
		then the coefficient of the highest-order derivative will never vanish for any iteration.
	\end{lemma}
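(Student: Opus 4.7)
The plan is to prove the lemma by a direct elementary analysis of when the coefficient $\alpha_i(i) = 2(\rho_{0)} + (i+1)\rho_{2)})$ vanishes, and then to verify that the interval prescribed in the statement excludes all such vanishing values. Since $\rho_{0)}=\rho(\theta_0)>0$ by the hypothesis that $\rho$ represents a positive depth function (see Section~\ref{sec:notation}), the analysis reduces to understanding a countable set of negative real numbers indexed by $i$.

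First, I would solve the equation $\alpha_i(i)=0$ explicitly for $\rho_{2)}$, obtaining the forbidden set
\begin{equation}
B \;=\; \left\{\, -\tfrac{\rho_{0)}}{i+1} \,:\, i\in\mathbb{N},\ i\geq 2 \,\right\} \;=\; \left\{\,-\tfrac{\rho_{0)}}{3},\,-\tfrac{\rho_{0)}}{4},\,-\tfrac{\rho_{0)}}{5},\ldots\,\right\}.
\end{equation}
Because $\rho_{0)}>0$, each element of $B$ is strictly negative, the maximum of $B$ is $-\rho_{0)}/3$ (attained at $i=2$), and the sequence accumulates at $0$ from below. Consequently $B\subset [-\rho_{0)}/3,\,0)$.

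Next, I would verify the inclusion by a two-case argument on the hypothesis $\rho_{2)} \in (-\infty,-\rho_{0)}/3)\cup[0,\infty)$. In the first case $\rho_{2)}<-\rho_{0)}/3$, so $\rho_{2)}$ is strictly less than every element of $B$, hence $\rho_{2)}\notin B$. In the second case $\rho_{2)}\geq 0$, while every element of $B$ is strictly negative, so again $\rho_{2)}\notin B$. In either case $\alpha_i(i)=2(\rho_{0)} + (i+1)\rho_{2)})\neq 0$ for every $i\geq 2$, which is exactly the claim.

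There is no serious obstacle; the only subtlety worth flagging explicitly in the write-up is the use of the sign hypothesis $\rho_{0)}>0$, which is essential to pin down $B$ on the negative half-line and to ensure that $-\rho_{0)}/3$ is the largest (rather than the smallest) forbidden value. I would end by remarking that the stated good set is in fact only a sufficient, not a necessary, condition: the genuine complement $\mathbb{R}\setminus B$ additionally contains the open gaps $\bigl(-\rho_{0)}/(i+1),\,-\rho_{0)}/(i+2)\bigr)$ for $i\geq 2$, but excluding the whole closed interval $[-\rho_{0)}/3,0)$ gives a clean, easily verifiable criterion, which is exactly what the next lemma will exploit.
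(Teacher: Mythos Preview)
Your argument is correct and matches the paper's own reasoning: the paper derives just before the lemma that $\alpha_i(i)=2(\rho_{0)}+(i+1)\rho_{2)})$ vanishes precisely when $\rho_{2)}=-\rho_{0)}/(i+1)$ for some $i\geq 2$, and then states the lemma without an explicit proof, treating it as an immediate consequence. Your write-up makes that implicit step explicit by locating the forbidden set inside $[-\rho_{0)}/3,0)$ and checking the two cases, which is exactly the intended (and essentially the only) route.
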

	Lemma~\ref{lm:suffcientCondition} is satisfied when the value of the second-order derivative of $U$ belongs to the next set:
	\begin{equation}
	U_{2)} \in \left[\frac{-\rho_{0)}^2}{2},\frac{-4\rho_{0)}^2}{9}\right) \cup \left[0,\infty\right)
	\end{equation}
		\begin{lemma}
		\label{lm:suffcientCondition2}
		If the ratio $\frac{\rho_{2)}}{\rho_{0)}} \not \in \mathbb{Q}$, that means it is irrational, then the coefficient of the highest-order derivative will never vanish for any iteration $\alpha_{i}(i)\neq0\forall i\ge2$.
	\end{lemma}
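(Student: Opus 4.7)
The proof of Lemma~\ref{lm:suffcientCondition2} reduces to a one-line number-theoretic observation once the closed-form expression $\alpha_{i}(i) = 2(\rho_{0)} + (i+1)\rho_{2)})$ for $i \geq 2$, derived just above from the general Leibniz expansion of $x_{n)}$ and $y_{n)}$, is available. My plan is to argue by contrapositive: if $\alpha_{i}(i)$ vanishes for some iteration index $i \geq 2$, then the ratio $\rho_{2)}/\rho_{0)}$ must be rational, contradicting the hypothesis of the lemma.

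The execution is essentially immediate. Suppose $\alpha_{i}(i) = 0$ for some integer $i \geq 2$. Rearranging the formula gives $\rho_{0)} = -(i+1)\rho_{2)}$, which coincides with the equivalent non-vanishing condition $\rho_{2)} \neq -\rho_{0)}/(i+1)$ already displayed just above the lemma. The hypothesis ``$\rho_{2)}/\rho_{0)} \notin \mathbb{Q}$'' tacitly presumes $\rho_{0)} \neq 0$ so that the ratio is well-defined, and dividing by $\rho_{0)}$ therefore yields
\begin{equation}
\frac{\rho_{2)}}{\rho_{0)}} = -\frac{1}{i+1} \in \mathbb{Q},
\end{equation}
the desired contradiction. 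Reading this contrapositively is exactly the lemma's assertion that $\alpha_{i}(i) \neq 0$ for every $i \geq 2$.

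There is essentially no obstacle beyond recognising that the countable family of ``bad'' values $\{-\rho_{0)}/(i+1) : i \in \mathbb{N},\ i \geq 2\}$ consists entirely of rational multiples of $\rho_{0)}$, so an irrational ratio $\rho_{2)}/\rho_{0)}$ sidesteps all of them simultaneously. This is precisely what makes the hypothesis of Lemma~\ref{lm:suffcientCondition2} qualitatively cleaner than the interval condition of Lemma~\ref{lm:suffcientCondition}: rather than confining $\rho_{2)}$ to a union of intervals, one merely excludes a countable, measure-zero set of ratios, which suffices to guarantee that every iteration of the Taylor recursion admits a unique continuation.
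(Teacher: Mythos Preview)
Your argument is correct. The paper actually states Lemma~\ref{lm:suffcientCondition2} without proof, so there is no ``paper's own proof'' to compare against; your contrapositive from the displayed formula $\alpha_i(i)=2(\rho_{0)}+(i+1)\rho_{2)})$ is exactly the intended one-line justification and nothing more is needed.
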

	We can prove also that if for some iteration the coefficient of the highest-order vanishes, thus, this coefficient will never go back to vanish in successive iterations. Consequently, we obtain a system with more variables than equations. In this case, each derivatives depends on parameter forming a 1-parameter family of coefficients.
	
	We call degenerated cases those one whose coefficient of the highest-order $\alpha_m(i)$ vanishes for some iteration $i$. The set of degenerates cases $\mathcal{B}$ is a countable infinite and is contained in the next interval:
	\begin{equation}
		\rho_{2)} \in \mathcal{B}=\left\{\frac{-\rho^{0)}}{(i+1)}\;\big|\; i\in\mathbb{N},\:i\geq 2\right\}\subset I_{\mathcal{B}}=\left[\frac{-\rho^{0)}}{3},0\right)
	\end{equation}
		
	\item Assuming that we can obtain all the coefficients, we can build a Taylor serie and impossing analiticity in critical points.
	Taylor series of functions in analytical points converges in a neighbourhood of the point to the function (see the book~\cite{}).
	As we assumed that critical points are isolated points we can use \emph{Picard-Lindelöf} theorem to any point of the neighbourhood of the point proves the existence and the uniqueness as we show in case of derivative non-null.
	\emph{Quod erat demonstrandum}
\end{enumerate}

\section{The Maximal Curve}
\label{sec:maximalCurve}

We showed in Section~\eqref{eq:1} the importance of critical points in the study of the uniqueness of multiple solutions.
The maximal depth function and the maximal curve associated play an important role in order to solve the IVP~\eqref{eq:pvi1}.
Knowing its behaviour is transcendent to find all possible global solutions to the IVP~\eqref{eq:pvi1}.

\begin{definition}
    Let define the maximal depth function $\rho_{max}$ as the function that result of vanishing the term $\frac{d\rho}{d\theta}$ of the equation~\eqref{eq:1}. Therefore,
    \begin{align}
	    \rho_{max}:I & \longmapsto \mathbb{R}^+ \nonumber\\
	    \theta   &\longmapsto \rho_{max}(\theta)= +\sqrt{U(\theta)}
    \end{align}
    
\end{definition}
\begin{definition}
	The maximal curve $\mathcal{C}_{max}$ is the image of the polar perspective parametrization $(I,X_{\rho_{max}})$ whose depth function associated is $\rho_{max}$.
\end{definition}
The maximal depth function $\rho_{max}$ is no solution in general of the ODE~\eqref{eq:1}.
Effectively, regions $J\subset I$ where the maximal function $\rho_{max}$ is strictly monotonically increasing $\frac{dU}{dt} > 0$ or strictly monotonically decreasing $\frac{dU}{dt} < 0$ cannot fullfil the ODE:
\begin{equation}
\left(\frac{d\rho_{max}}{d\theta}\right)^2 + \rho_{max}^2 = \left(\frac{d\rho_{max}}{d\theta}\right)^2 + U > U.
\end{equation}
Only those points of $\rho_{max}$ whose first-derivatives vanish fulfills the ODE~\eqref{eq:1}.
These are extrema or inflection points of the maximal depth function as the next theorem proves.
\begin{theorem}
	Extrema or inflection points of the maximal curve are critical points of the ODE~\eqref{eq:1}.
\end{theorem}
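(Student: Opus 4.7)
The plan is to verify the claim by a direct calculation at the candidate points, using the defining relation $\rho_{max}^2 = U$ and the chain rule. The key observation is that at any $\theta_0\in I$ where $\rho_{max}$ admits a local extremum or an inflection point with horizontal tangent, the derivative $\rho_{max}'(\theta_0)$ vanishes; differentiating $U=\rho_{max}^2$ gives $U'(\theta_0)=2\,\rho_{max}(\theta_0)\,\rho_{max}'(\theta_0)=0$ (recall that $\rho_{max}(\theta_0)=\sqrt{U(\theta_0)}>0$ on the admissible part of the domain).

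Next, I would evaluate the planar-perspective equation~\eqref{eq:1} at the candidate pair $(\theta_0,\rho_{max}(\theta_0))$. Since $\rho_{max}^2(\theta_0)=U(\theta_0)$, the ODE collapses to $(\rho'(\theta_0))^2=U(\theta_0)-\rho_{max}^2(\theta_0)=0$, forcing $\rho'(\theta_0)=0$. This is exactly the definition of a critical initial condition introduced in Section~\ref{sec:singCond}, so $(\theta_0,\rho_{max}(\theta_0))$ is a critical point of the ODE. This is the heart of the proof.

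To make the statement geometrically complete, I would also remark on the converse implicit in the discussion preceding the theorem: a solution $\rho$ of~\eqref{eq:1} can only touch $\rho_{max}$ at a point where $\rho_{max}'$ vanishes. Indeed, the ODE forces $(\rho')^2=U-\rho^2\ge 0$, so $\rho\le\rho_{max}$ on its domain; if $\rho(\theta_0)=\rho_{max}(\theta_0)$, then $\rho_{max}-\rho\ge 0$ attains its minimum at $\theta_0$, so $(\rho_{max}-\rho)'(\theta_0)=0$, which (combined with $\rho'(\theta_0)=0$ from the previous paragraph) yields $\rho_{max}'(\theta_0)=0$, recovering precisely the extremum/horizontal-inflection condition.

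I do not anticipate any serious obstacle: once one recognizes that the maximal depth function is nothing more than the envelope $\sqrt{U}$ obtained by setting $\rho'=0$ in~\eqref{eq:1}, both implications reduce to one-line algebra together with the chain rule. The only care needed is to interpret "inflection point of the maximal curve" in the sense used throughout the excerpt, namely a non-extremal zero of $\rho_{max}'$ (for example the type that $x^3$ exhibits at the origin), so that the characterization $\rho_{max}'(\theta_0)=0$ covers both cases uniformly.
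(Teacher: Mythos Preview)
Your proposal is correct and follows essentially the same approach as the paper: at an extremum or inflection point one has $\rho_{max}'(\theta_0)=0$, and since $\rho_{max}^2=U$ by definition, the pair $(\theta_0,\rho_{max}(\theta_0))$ satisfies equation~\eqref{eq:1} with vanishing first derivative, i.e.\ it is a critical point. The paper's proof is just this one line; your additional remarks on $U'(\theta_0)=0$ and on the converse are correct and consistent with the surrounding discussion but go beyond what the theorem itself requires.
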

\begin{proof}
	Extrema and inflection points have null first-order derivative.
	Consequently, they fulfills the equation.
	\begin{equation}
	\left(\frac{d\rho_{max}}{d\theta}\right)^2 + \rho_{max}^2 = \rho_{max}^2 = U.
	\end{equation}
\end{proof}
The importance of this theorem comes from the fact that critical points of the ODE are contained in this maximal curve.
We can compute where the critical points of the equation are located before calculate its solutions.

The next corollary guarantees two global solutions to the IVP~\eqref{eq:pvi1} looking for the first-derivative of the maximal depth function.
\begin{corollary}
	If the maximal depth function $\rho_{max}$ has no extrema or inflection points, then, there will be local solutions only two solution of the IVP\eqref{eq:pvi1} in all the definition domain $I$.
	There will be a unique monotonic increased solution and a unique monotonic decreased solution.
\end{corollary}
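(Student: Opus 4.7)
The plan is to derive the corollary directly from the preceding theorem together with the first remark following Proposition~\ref{th:SufConPL}. By that theorem, any critical point of the ODE~\eqref{eq:1} in $I$ must coincide with a point where $\rho_{max}' = 0$, that is, with an extremum or inflection point of the maximal curve. The hypothesis rules these out, so the ODE has no critical points in $I$, and the remark then applies verbatim to give exactly two global solutions on $I$, one monotonically increasing and one monotonically decreasing. The rest of the work is simply to make the global extension step in the remark self-contained.

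To do this I would fix a regular IC $(\theta_0,\rho_0)\in I$ and let $\rho_+$, $\rho_-$ denote the unique local solutions of the two IVPs of Table~\ref{tab:pvi1} produced by Proposition~\ref{th:SufConPL}. By the standard continuation procedure, each $\rho_\pm$ can be extended along $I$ as long as the right-hand side $f(\theta,\rho)=\pm\sqrt{U(\theta)-\rho^2}$ of~\eqref{eq:2} remains Lipschitz in $\rho$, and this in turn holds on any closed subinterval where $\rho_\pm^2<U$ strictly. Since $\rho_\pm^2\le U$ is automatic from~\eqref{eq:1}, the only possible obstruction to global existence on $I$ is that the solution touches the maximal curve $\rho_{max}=\sqrt{U}$ at some interior point.

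Ruling out this contact is the main (and essentially only nontrivial) obstacle, and it uses the hypothesis a second time. Suppose for contradiction $\rho_+(\theta^\star)=\rho_{max}(\theta^\star)$ for some $\theta^\star\in I$. Because $\rho_{max}-\rho_+$ is non-negative on $I$ and vanishes at $\theta^\star$, that point is an interior minimum, hence $\rho_{max}'(\theta^\star)=\rho_+'(\theta^\star)$. But at a contact point $\rho_+'(\theta^\star)=+\sqrt{U(\theta^\star)-\rho_{max}^2(\theta^\star)}=0$, which forces $\rho_{max}'(\theta^\star)=0$, contrary to hypothesis. The mirrored argument handles $\rho_-$. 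Therefore both $\rho_\pm$ extend to all of $I$, they are strictly monotonic since $\rho_\pm'=\pm\sqrt{U-\rho_\pm^2}$ never vanishes, and they are the only two solutions of the IVP: any other solution would locally coincide with $\rho_+$ or $\rho_-$ by the Picard--Lindel\"of uniqueness in Proposition~\ref{th:SufConPL}, and in the absence of critical points no branching across the two sign choices can occur.
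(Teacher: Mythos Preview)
Your approach is the paper's: the preceding theorem forces any critical point of the ODE to sit at a point where $\rho_{max}'=0$, the hypothesis rules those out, and then one extends the Picard--Lindel\"of interval to all of $I$. Your first paragraph already reproduces the paper's (two-sentence) proof; the remaining paragraphs add detail that the paper simply omits.

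One caution about that added detail: calling $\theta^\star$ an \emph{interior} minimum of $\rho_{max}-\rho_+$ presupposes that $\rho_+$ is already defined on both sides of $\theta^\star$, which is exactly what you are trying to establish. If $\theta^\star$ is the right endpoint of the maximal interval of existence of $\rho_+$, the minimum is one-sided and you only obtain $(\rho_{max}-\rho_+)'(\theta^\star{}^-)\le 0$, hence $\rho_{max}'(\theta^\star)\le 0$ rather than $=0$; when $\rho_{max}$ is strictly decreasing this does not yield a contradiction. The paper's proof does not confront this point either, so your argument is no weaker than the original, but the extension step is more delicate than either version makes explicit.
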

\begin{proof}
	The ODE has no critical points in the definition domain $I$ of the ODE.
	Consequently, we can extend the inteval $J$ where  \emph{Picard-Lindelöf} is satisfied until covering the definition domain $I$.
\end{proof}
Now we study the behavior of a critical point that is a minimum of the maximal depth function $\rho_{max}$.
We prove that there exist only a unique solution that has a minimum in the minimum of the maximal depth function. Figure~\eqref{fig:minimumSol} helps us to prove the next theorem.
\begin{theorem}
	If the IC of the IVP~\eqref{eq:pvi1} is a critical point related with a minimum of the maximal depth function $\rho_{max}$, then, there exits a unique local solution $\rho$ of the IVP~\eqref{eq:pvi1} with a minimum in the IC.
	\label{th:minSol}
\end{theorem}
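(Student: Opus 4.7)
The plan is to combine the bifurcation analysis of Section~\ref{sec:singCond} with the geometric condition that $\theta_0$ is a minimum of $\rho_{max}$, and observe that of the two possible second-order derivatives prescribed by the quadratic~\eqref{eq:5}, exactly one is non-negative and therefore the unique candidate to produce a local minimum of $\rho$ at $\theta_0$.

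First I would translate the hypothesis into conditions on $U$. Since $\rho_{max}=\sqrt{U}$, direct differentiation gives $\rho_{max}''(\theta_0)=U_{2)}(\theta_0)/(2\rho_0)$ at any point where $U_{1)}(\theta_0)=0$, so a strict minimum of $\rho_{max}$ at $\theta_0$ forces $U_{1)}(\theta_0)=0$ and $U_{2)}(\theta_0)>0$. The IC being critical gives $\rho_{1)}(\theta_0)=0$, and then~\eqref{eq:1} yields $\rho_0=\sqrt{U(\theta_0)}$, i.e.\ the IC lies on the maximal curve.

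Next I would invoke the bifurcation relation~\eqref{eq:5} for the second derivative at $\theta_0$, whose two roots
\begin{equation}
\beta_\pm=\frac{-\rho_0\pm\sqrt{\rho_0^2+2U_{2)}(\theta_0)}}{2}
\end{equation}
are, by Case~2 in Section~\ref{sec:singCond}, the only admissible values of $\rho_{2)}(\theta_0)$ for an analytic solution through the IC. Under $U_{2)}(\theta_0)>0$ and $\rho_0>0$ one has $\sqrt{\rho_0^2+2U_{2)}(\theta_0)}>\rho_0$, hence $\beta_+>0$ and $\beta_-<-\rho_0<0$. A local minimum of $\rho$ at $\theta_0$ forces $\rho_{2)}(\theta_0)\ge 0$, so only the $\beta_+$ branch qualifies; the $\beta_-$ branch attains a local maximum at $\theta_0$. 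Since Section~\ref{sec:singCond} already guarantees at most two analytic solutions through the critical IC, the $\beta_+$ branch is the unique local solution with a minimum at the IC.

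The main obstacle is the non-strict case $U_{2)}(\theta_0)=0$, which makes $\beta_+=0$ and leaves the character of $\theta_0$ undetermined by the second-order test. I would handle it by iterating the differentiation scheme of Section~\ref{sec:singCond} until the first non-vanishing derivative of $U$ at $\theta_0$; minimality of $\rho_{max}$ forces that order to be even and its sign non-negative, and the analogous discriminant argument at that order selects a unique branch whose leading non-trivial Taylor coefficient beyond $\rho_0$ is positive, giving the desired minimum. One must also verify that this branch does not fall into the degenerate set $\mathcal{B}$ of Section~\ref{sec:singCond}, so that the resulting Taylor series genuinely converges to a smooth solution of the IVP.
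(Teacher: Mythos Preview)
Your approach is genuinely different from the paper's. The paper does not invoke the Taylor--bifurcation machinery of Section~\ref{sec:singCond} at all here; it gives a short comparison argument directly on the ODE. If $\rho_1,\rho_2$ both have a minimum at the critical IC, then on the right of $\theta_0$ both satisfy $\rho'=+\sqrt{U-\rho^2}$. Assuming $\rho_1>\rho_2$ on $(\theta_0,\epsilon]$ forces $0<\rho_1'<\rho_2'$ there, and integrating from the common value $\rho_0$ gives $\rho_1<\rho_2$, a contradiction; the same reasoning works on the left side. This is elementary, requires no analyticity, and needs nothing about $U_{2)}(\theta_0)$ or higher derivatives.

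Your argument, by contrast, lives entirely inside the analytic classification of Section~\ref{sec:singCond}: you correctly observe that when $U_{2)}(\theta_0)>0$ exactly one of the two roots $\beta_\pm$ is non-negative, so only one analytic branch can have a minimum. That is fine as far as it goes, but it leaves a real gap. Section~\ref{sec:singCond} only bounds the number of \emph{analytic} solutions through a critical IC, and Section~\ref{sec:convCone} shows explicitly that non-analytic $C^1$ solutions can accumulate at critical ICs. Your proposal therefore does not exclude a non-analytic solution with a minimum at $\theta_0$, which is precisely what the application in Section~\ref{sec:maxDepthSol} (uniqueness of the piecewise $C^1$ maximal-depth solution) needs. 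The paper's monotonicity argument handles all $C^1$ solutions at once, and also spares you the degenerate-case analysis you sketch for $U_{2)}(\theta_0)=0$ and the verification that the selected branch avoids the set $\mathcal{B}$.
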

\begin{proof}
	Assuming that there exist two monotonic increasing solutions $\rho_1(\theta)$, $\rho_2(\theta)$ defined at the right side of the critical point $(\theta_0,\rho_0)$ noted with $J^+=[\theta_0,\epsilon]$ that obey $\rho_1(\theta_0)=\rho_2(\theta_0)=\rho_0$.
	Assuming that $\rho_1>\rho_2$ in $(\theta_0,\epsilon]$, as both as solutions of the ODE, they fulfills $0<\frac{d\rho_1}{d\theta} < \frac{d\rho_2}{d\theta}$ in $(\theta_0,\epsilon]$.
	Consequently, $\rho_2$ grows faster than $\rho_2$ and as they start growing at the same point $(\theta_0,\rho_0)$, $\rho_2>\rho_1$ that is a contradiction.
	We can repeat the same argument to the left side $J^-$ of the IC.
	As a conclusion, there are only one solution of the ODE $\rho_1=\rho_2=\rho$ with a minimum at the same place.
\end{proof}
\begin{figure}[h]
	\centering
	\includegraphics[width=0.6\textwidth]{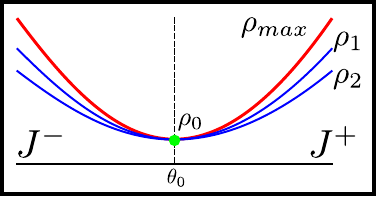}
	\caption{Proof the existence to a unique monotonically increasing function at the right side. There are only a unique monotonically decreasing function in the left side of the minimum}.
	\label{fig:minimumSol}
\end{figure}
Maxima of the maximal depth function $\rho_{max}$ do not present this behavior.
They may exist a \emph{converge cone} with an infinite dense set of solutions with a maximum at the same maximum of the depth function, see Section~\ref{sec:convCone} and Figure~\ref{fig:ex1}-\ref{fig:ex2}. Theorem~\ref{th:minSol} can be reformulated to prove the unique solution of the branch that match with a minimum if the critical IC is an inflection point.

The maximal depth function $\rho_{max}$ has another important property.
It forms an upper-bound of the sets of all possibles solutions of the ODE~\eqref{eq:2} as the next Theorem proves.
Because of this reason, we call this function as maximal depth function.
\begin{theorem}
	The maximal depth function obeys:
	\begin{equation}
	\rho_{max}(\theta) \ge \rho(\theta),
	\end{equation}
	where $\rho(\theta)$ is any solution of the ICV.
	The equality succeed in  critical points of the ODE.
\end{theorem}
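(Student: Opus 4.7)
The plan is to read the upper bound directly off the ODE. I would start from the planar-perspective equation $\left(\frac{d\rho}{d\theta}\right)^2 + \rho^2(\theta) = U(\theta)$ and isolate $\rho^2$, giving $\rho^2(\theta) = U(\theta) - \left(\frac{d\rho}{d\theta}\right)^2$. Since the squared derivative is a nonnegative real, it follows that $\rho^2(\theta) \le U(\theta)$ for every $\theta$ in the domain of any solution.

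Next I would use the sign hypothesis on $\rho$ built into the problem statement, namely that $\rho(\theta) \ge 0$ (the \emph{depth function} takes values in $\mathbb{R}^+$). Taking the positive square root on both sides of $\rho^2(\theta) \le U(\theta)$ then yields $\rho(\theta) \le \sqrt{U(\theta)} = \rho_{max}(\theta)$, which is the inequality claimed in the theorem. This step uses nothing beyond monotonicity of the square root on $[0,\infty)$ and the definition of $\rho_{max}$.

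For the equality case, I would argue both directions of the iff. If $\theta_c$ is a critical point of the solution $\rho$, then by definition $\frac{d\rho}{d\theta}(\theta_c) = 0$, so substituting into the ODE gives $\rho^2(\theta_c) = U(\theta_c)$, and by nonnegativity $\rho(\theta_c) = \rho_{max}(\theta_c)$. Conversely, if $\rho(\theta) = \rho_{max}(\theta)$ at some $\theta$, then $\rho^2(\theta) = U(\theta)$, so the ODE forces $\left(\frac{d\rho}{d\theta}(\theta)\right)^2 = 0$, i.e.\ $\theta$ is a critical point of $\rho$.

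There is no real obstacle here: the result is essentially a one-line algebraic consequence of the ODE and the sign convention on $\rho$. The only subtlety worth mentioning in the write-up is to make explicit that the inequality is saturated precisely on the set of critical points of $\rho$, which is exactly the locus where solutions can branch and where $\rho_{max}$ contacts them tangentially, tying this bound back to the role of critical points in the uniqueness analysis of Section~\ref{sec:regularICs} and Section~\ref{sec:singCond}.
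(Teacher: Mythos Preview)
Your proposal is correct and follows essentially the same route as the paper: both arguments drop the nonnegative term $\left(\frac{d\rho}{d\theta}\right)^2$ from the sum-of-squares identity $\rho^2 + \left(\frac{d\rho}{d\theta}\right)^2 = U = \rho_{max}^2$ to obtain $\rho^2 \le \rho_{max}^2$, then use $\rho \ge 0$ to take the positive square root. Your write-up is somewhat more explicit than the paper's in spelling out the use of nonnegativity of $\rho$ and in proving both directions of the equality characterization, but the underlying idea is identical.
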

\begin{proof}
	As all solutions $\rho$ of the equation~\eqref{eq:2} fulfill the equation itself and this one is a sum of squares, we obtain the next inequality.
	 
	\begin{equation}
		\rho^2 \le \left(\frac{d\rho}{d\theta}\right)^2 + \rho^2 = U =  \rho_{max}^2.
	\end{equation}
	The equality is arisen only in critical points where $\frac{d\rho}{d\theta} = 0$.
\end{proof}
This is an important point in order to search the solution of the ODE with the maximum depth.
The solution will share these critical points and will be tangent to the maximal depth function.

\section{The Maximal Solution}
\label{sec:maxDepthSol}
A problem that arises from the study of this particular ODE~\eqref{eq:1} consists in finding the solution with more depth. It means that is the farthest from the coordinate origin for all value $\theta\in I$.
The problem is equivalent to look for the solution with least curvature or the most smooth.
Figure~\ref{fig:maximalSol} illustrates the problem of finding the \emph{maximal solution} of the ODE.
\begin{figure}[h]
	\centering
	\includegraphics[width=0.6\textwidth]{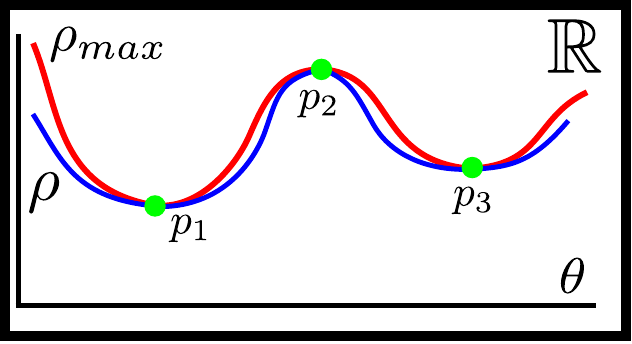}
	\caption{The red curve represents a maximal depth function with three extrema points. The maximal solution passes through these three points and maximize the depth for each point or equivalently minimize the curvature.}
	\label{fig:maximalSol}
\end{figure}
Let start defining the set af all possible solutions to the ODE~\ref{eq:1}.
\begin{definition}
	Let $\mathcal{S}$ be the set of all possible solutions of the ODE~\eqref{eq:1}.
	\begin{equation}
		\mathcal{S} = \left\{\rho \mid \left(\frac{d\rho}{d\theta}\right)^2 + \rho^2 = U \right\}
	\end{equation}
	As the set of solution $\mathcal{S}$ is closed, upper-bounded by the maximal depth function $\rho_{max}(\theta)$ as we indicated in Section~\ref{sec:maximalCurve} and lower-bounded by $\rho(\theta)=0$, we define the maximal depth solution $\rho_M$ as the solution which obey:
	\begin{equation}
		\rho_M(\theta) \ge \rho(\theta) \quad \forall \rho \in \mathcal{S}, \forall \theta
	\end{equation}
\end{definition}
The maximal depth solution $\rho_M$ passes through all critical points because this points are depth maxima.
\begin{equation}
\rho_M(\theta) = \rho_{max}(\theta), \quad \theta \in \mathcal{A}
\end{equation}
For addressing the whole problem, we consider the partition $P=\mathcal{A}$ of the global domain $I$ built with from all possible critical points. Consequently, $I=[\theta_0,\theta_1]\cup\cdots\cup[\theta_{|\mathcal{A}|},\theta_{|\mathcal{A}|+1}]$.
We first start reducing the problem to the following Boundary Value Problem (BVP) considering one of the intervals $J_i=[\theta_{i}, \theta_{i+1}]$ generated through the partition $P$. 
\begin{equation}
\left\lbrace
\begin{array}{l}
\left(\frac{d\rho(\theta)}{d\theta}\right)^2 + \rho^2(\theta) = U(\theta)\\
\rho(\theta_i) = \rho_{i}, \; \rho(\theta_{i+1})=\rho_{i+1} \quad \theta\in J_i \; , \; i \in \{\,j\;|\;0<j<|\mathcal{A}|\}
\end{array}
\right.
\label{eq:bvp1}
\end{equation}
Consequently, we are in the situation that Figure~\ref{fig:increaseDecreaseMaxDepth} shows.
We ask for the solutions of the ODE that passes through both critical points.
\begin{figure}[h]
	\centering
	\includegraphics[width=0.7\textwidth]{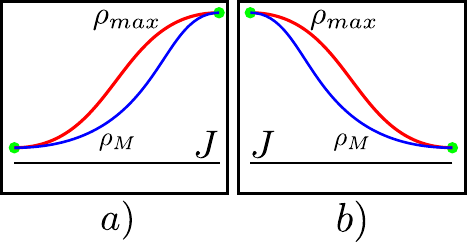}
	\caption{Reduced BVP.}
	\label{fig:increaseDecreaseMaxDepth}
\end{figure}
Notice that there exist only one solution to this problem calling Theorem~\ref{th:minSol}.
One of the critical point is a minimum of the maximal depth function, the first one in case of Figure~\ref{fig:increaseDecreaseMaxDepth}.a) and the second one in case of Figure~\ref{fig:increaseDecreaseMaxDepth}.b).

Taking all critical points of the partition $P$ in consecutive pairs we obtain a unique solution for each individual BVP. Concatenation the solutions we generate a solution that cover all the definition domain $I$ except to the extreme intervals $J_0$ and $J_{|\mathcal{A}|}$.
Each solution of an individual BVP fits with the next one due to the fact because they share a critical point that fit in first-order derivative.
Consequently, the solution composed joining solutions of this BVP is of the class $C^1$.

\section{Convergence cones}
\label{sec:convCone}

The previous section proves the existence of a maximum of two analytic solution of the IVP around a critical point.
But multiple non-analytic solutions could appear. We show that when the second-order derivatives extracted by our method are both negatives or null, then, there exists a convergent region delimited by the two analytic solutions where the IVP is satisfied.

Consider the next IVP~\eqref{eq:ex1}:
\begin{equation}
	\left\lbrace
	\begin{array}{l}
	\frac{d\rho}{d\theta} = f(\theta,\xi)\\
	\rho(0) = 1,
	\end{array}
	\right.
	\label{eq:ex1}
\end{equation}
where
\begin{equation}
f:\left[0,\frac{\pi}{2}\right]\cross\left[0,1\right] \longmapsto \mathbb{R}
\end{equation}
is defined as $f(\theta,\xi) = -\sqrt{1-\xi^2}$. Observe that it is one of the IVP derived from the original one~\ref{eq:pvi1} substituting $U(\theta)=1$, see Table~\ref{tab:pvi1}.
In this example, the IC $\rho(0)=1$ is a critical point for the ODE equation.
Moreover, as the ODE is autonomous $f(\theta,\xi) = f(\xi)$ all ICs $\rho(\theta) = 1$ are critical points.
The IVP has two analytic solutions.
The constant function $x_1(\theta) = 1$ is an analytical solution of the problem and the maximal curve too.
We can find the other analytical solution $x_2(\theta) = cos(\theta)$ solving the separable ODE.
Our method yields to the same solutions if it is applied to the quadratic equation~\eqref{eq:ex1_q}.
\begin{equation}
\left(\frac{d\rho(\theta)}{d\theta}\right)^2 + \rho^2(\theta) = 1,
\label{eq:ex1_q}
\end{equation}
In this case, we obtain the Taylor series of $x_1(\theta)$ and $x_2(\theta)$:
\begin{equation}
\left\lbrace
\begin{array}{l}
x_1(\theta) = 1\\
x_2(\theta) = \sum^{\infty}_{n=0} \frac{(-1)^n x^{2n}}{(2n)!},
\end{array}
\right.
\label{eq:ex1_series}
\end{equation}
Observe that as the ODE is autonomous, solutions of the shifted IVP, are also solutions of original IVP~\eqref{eq:ex1}.
We can built new piecewise solutions from the analytic ones to sweep all the space and.
Therefore, the next non-analytic piecewise functions are solutions of the IVP too.
\begin{equation}
x_{\theta_0}(\theta) = 
\begin{cases}
1 & \theta < \theta_0\\
\cos(\theta-\theta_0) & \theta \ge \theta_0,
\end{cases}
\label{eq:ex1_nAnalSol}
\end{equation}
where $\theta_0 \in \left(0, \frac{\pi}{2}\right)$.
Figure~\ref{fig:ex1} shows both analytical (the blue and red curves) and non-analytical solutions (green curves) of the IVP~\eqref{eq:ex1}. Only $\mathcal{A} = \{(\theta,1) \mid \theta \in [0,\frac{\pi}{2}] \}$ are critical points for de ODE.
\begin{figure}[h]
	\centering
	\includegraphics[width=0.7\textwidth]{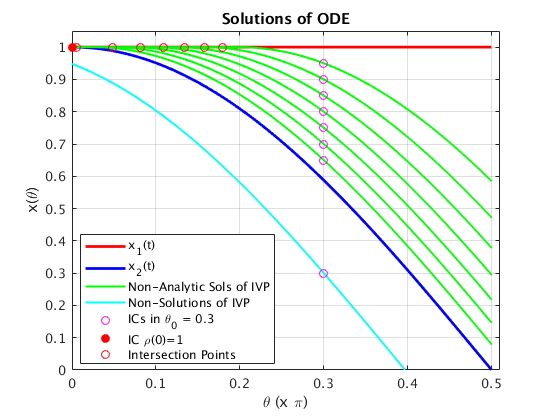}
	\caption{Solutions of the IVP. Analytic solutions of the IVP $x_1(\theta)$ and $x_2(\theta)$ are represented in red and blue respectively.
		Non-analytic solutions are drawn in green. Observe that non-analytic solutions are made shifting $x_2(\theta)$ to other IC (represented in magenta) and merging with $x_1(t)$ when they reach it.}
	\label{fig:ex1}
\end{figure}

The region located between $x_1(\theta)$ and $x_2(\theta)$ is called convergence cone.
\begin{equation}
\mathcal{D} = \left\{ (\theta,\xi) \mid \theta \in \left(0,\frac{\pi}{2}\right], x_2(\theta) < \xi < x_1(\theta) \right\}
\end{equation}
Theorem~\ref{th:sandwitch} proves that if we choose any IC $(\theta_0,\rho_0) \in \mathcal{D}$ then there exists a solution $x_s(\theta)$ of the IVP~\eqref{eq:ex1_2}

\begin{equation}
	\left\lbrace
	\begin{array}{l}
	\frac{d\rho}{d\theta} = f(\theta,\xi)\\
	\rho(\theta_0) = \rho_0,
	\end{array}
	\right.
	\label{eq:ex1_2}
\end{equation}
that obey $x_1(\theta) < x_s(\theta) < x_2(\theta)$ where $\theta\in\left(0,\frac{\pi}{2}\right]$ and only match in the critical points $(0,1)$.

\begin{theorem}
	Consider the next IVP 
	\begin{equation}
		\left\lbrace
		\begin{array}{l}
		\frac{d\rho}{d\theta} = f(\theta,\rho)\\
		\rho(\theta_0) = \rho_0,
		\end{array}
		\right.
		\label{eq:th}
	\end{equation}
	where the function $f$
	\begin{equation}
		f:\left[\theta_0,b\right]\cross\mathbb{R}^+ \longmapsto \mathbb{R}
	\end{equation}
	is continuous in $[\theta_0,b]$, \emph{Lipschitz} in $(\theta_0,b]$ and satisfied $f(\theta_0,\rho_0)=0$. 
	Assuming that there are two different solutions $x_1(\theta)$ and $x_2(\theta)$ of the IVP~\eqref{eq:th}, choosing a new IC  $(\theta_*,\rho_*)$ from the set $\mathcal{D}$
	\begin{equation}
		\mathcal{D} = \left\{ (\theta,\xi) \mid \theta \in \left(\theta_0,b\right], x_1(\theta) < \xi < x_2(\theta) \right\}
	\end{equation}
	then the unique solution $x_*(\theta)$ of the new IVP~\eqref{eq:th2} that is built taken the previous one~\eqref{eq:th} and changing the IC $(\theta_*,\rho_*)$
	\begin{equation}
		\left\lbrace
		\begin{array}{l}
		\frac{d\rho}{d\theta} = f(\theta,\rho)\\
		\rho(\theta_*) = \rho_*,
		\end{array}
		\right.
		\label{eq:th2}
	\end{equation}
	is also a solution of the ICV~\eqref{eq:th}. Besides, it fullfils $x_1(\theta) < x_*(\theta) < x_2(\theta)$ for $\theta\in\left(\theta_0,b\right]$ and it intersect them in the IC $x_1(\theta_0) = x_*(\theta_0) = x_2(\theta_0)$. The set of points $\mathcal{D}$ is called convergence cone.
	\label{th:sandwitch}
\end{theorem}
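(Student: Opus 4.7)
My plan is to establish three things in sequence: (i) the unique \emph{Picard-Lindelöf} solution $x_*$ through $(\theta_*,\rho_*)$ extends to all of $(\theta_0,b]$ and is sandwiched strictly between $x_1$ and $x_2$ there; (ii) it admits a continuous extension to $\theta_0$ taking value $\rho_0$; (iii) the extension is $C^1$ up to $\theta_0$ and satisfies the ODE also at the endpoint, so $x_*$ solves the original IVP.

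First I would invoke \emph{Picard-Lindelöf} on $(\theta_0,b]$, where $f$ is Lipschitz, to obtain local existence and uniqueness of $x_*$ through the new IC, producing a maximal interval of existence $J_*\subset(\theta_0,b]$. Then I would carry out the non-crossing step: if $x_*(\theta_c)=x_i(\theta_c)$ for some $\theta_c\in J_*$ and $i\in\{1,2\}$, both functions solve the Cauchy problem with Lipschitz IC $(\theta_c,x_*(\theta_c))$, so by the standard "set of agreement is open and closed in the common interval" argument they coincide wherever both are defined. This contradicts $x_*(\theta_*)=\rho_*\neq x_i(\theta_*)$ guaranteed by $(\theta_*,\rho_*)\in\mathcal{D}$. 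Hence $x_1(\theta)<x_*(\theta)<x_2(\theta)$ on all of $J_*$, which confines $x_*$ to a compact strip, and a routine continuation argument then extends $J_*$ to the whole of $(\theta_0,b]$.

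Next I would handle the degenerate endpoint. Since $x_1(\theta_0)=x_2(\theta_0)=\rho_0$ and both envelopes are continuous at $\theta_0$, the squeeze theorem gives $\lim_{\theta\to\theta_0^+}x_*(\theta)=\rho_0$, so setting $x_*(\theta_0)=\rho_0$ produces a continuous function on $[\theta_0,b]$. Continuity of $f$ on $[\theta_0,b]\times\mathbb{R}^+$ together with $f(\theta_0,\rho_0)=0$ then gives $\lim_{\theta\to\theta_0^+}x_*'(\theta)=f(\theta_0,\rho_0)=0$, so $x_*$ is $C^1$ up to $\theta_0$ with $x_*'(\theta_0)=0$ and satisfies the ODE at the endpoint. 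Because $x_*(\theta_0)=\rho_0$, the function is a bona fide solution of the original IVP \eqref{eq:th}; its uniqueness on $(\theta_0,b]$ is already part of its construction.

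The hard part will be the non-crossing step together with the full continuation of $J_*$: the local uniqueness inside $(\theta_0,b]$ is routine, but one must carefully rule out $x_*$ exiting the cone by tangency with $x_1$ or $x_2$, and then use the resulting compact confinement to preclude any finite-time blow-up and reach $b$ on the right and $\theta_0$ as a limit on the left. Everything after that — the squeeze at the critical endpoint and the $C^1$-pasting — is an immediate consequence of the continuity of $f$ and of the two envelope solutions.
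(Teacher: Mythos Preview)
Your argument is correct and follows essentially the same route as the paper's: Lipschitz uniqueness on $(\theta_0,b]$ forces $x_*$ to stay strictly between $x_1$ and $x_2$ (non-crossing), and the squeeze theorem at $\theta_0$ then gives $x_*(\theta_0)=\rho_0$. You are simply more careful than the paper about the continuation of $J_*$ to all of $(\theta_0,b]$ and the $C^1$-pasting at the endpoint via continuity of $f$, both of which the paper leaves implicit.
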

\begin{proof}
	The \emph{Cauchy-Peano} theorem guarantees the existence of at least one solutions.
	As the function $f$ is \emph{Lipschitz continuous} in the domain except to in the IC, the IVP~\eqref{eq:th2} has a unique solutions $x_*(\theta)$ in a neighborhood of the IC $(\theta_*,\rho_*)$ that cannot intersect $x_1(\theta)$, $x_2(\theta)$ because if not the uniqueness would break.
	As solutions $x_1(\theta)$, $x_2(\theta)$ intersect into the only critical point and $x_*(\theta)$ is strictly bounded by these two solutions, calling \emph{Squeeze theorem}, $x_*(\theta)$  have to intersect both of them in the IC.
\end{proof}
Particularizing the Theorem~\ref{th:sandwitch} to our case, we have $f(\theta,\rho) = \pm \sqrt{U-\rho}$ that is continuous in all the domain and it is \emph{Lipthchitz continuous} except for the discrete set of points where $f$ vanishes.
The maximums of the $U(\theta)$ function have the properties of gives two negative values for the second-derivatives of the $\rho$ Taylor series.
It means that there will be two analytic solutions $\rho_1$, $\rho_2$ associated with the negative ODE IVP~\ref{tab:pvi1}.
\begin{corollary}
	Considering the IVP~\eqref{eq:pvi1} taking a maximum of $U$ as IC, there will exist a convergence cone and, consequently, multiple non-analytic solutions.
 \end{corollary}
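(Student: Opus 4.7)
The plan is to verify the hypotheses of Theorem~\ref{th:sandwitch} at a maximum of $U$ and then invoke it directly. The only non-trivial part is showing that the IVP at such a critical IC admits two distinct analytic solutions defined on a common right neighbourhood; everything else is a routine check.

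First I would set $\rho_0=\sqrt{U(\theta_0)}$ so that $(\theta_0,\rho_0)$ sits on the maximal curve and is a legitimate critical IC, and observe that $U_{1)}(\theta_0)=0$ because $\theta_0$ is an extremum of $U$, so the compatibility condition coming from equation~\eqref{eq:3} is automatic. Next I would invoke the Case~2 machinery of Section~\ref{sec:singCond}: the quadratic~\eqref{eq:5} evaluated at the IC gives
\begin{equation*}
\beta_{1,2}=\frac{-\rho_0\pm\sqrt{\rho_0^2+2U_{2)}(\theta_0)}}{2}.
\end{equation*}
Because $\theta_0$ is a maximum of $U$ we have $U_{2)}(\theta_0)\leq 0$, so in the generic subrange $-\rho_0^2/2<U_{2)}(\theta_0)<0$ both roots are real, non-positive, and distinct. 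The Taylor construction of Section~\ref{sec:singCond} then produces two genuinely different analytic solutions $\rho_1,\rho_2$ of the IVP, each with a local maximum at $\theta_0$ and strictly decreasing on a right neighbourhood of it.

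Once the two analytic branches are in hand, I would shrink $b>\theta_0$ if necessary so that both stay strictly below $\rho_{\max}=\sqrt{U}$ on $(\theta_0,b]$; this is possible because $\beta_1,\beta_2<0$ whereas $\rho_{\max}$ has a local maximum at $\theta_0$ too, so a second-order Taylor comparison separates them. On the resulting set
\begin{equation*}
\mathcal{D}=\{(\theta,\xi)\mid\theta\in(\theta_0,b],\;\rho_1(\theta)<\xi<\rho_2(\theta)\}
\end{equation*}
the decreasing branch $f(\theta,\rho)=-\sqrt{U(\theta)-\rho^2}$ is continuous up to the IC, vanishes there, and is Lipschitz in $\rho$ away from $\theta_0$ by exactly the argument of Proposition~\ref{th:SufConPL}. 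Theorem~\ref{th:sandwitch} then applies word for word: for every interior $(\theta_*,\rho_*)\in\mathcal{D}$ the unique solution of the shifted IVP is trapped strictly between $\rho_1$ and $\rho_2$ on $(\theta_0,b]$ and meets them both at the IC by the squeeze argument, hence solves the original IVP~\eqref{eq:pvi1}. Since $\mathcal{D}$ is uncountable and analytic continuation forces any analytic solution of the IVP to coincide with one of $\rho_1,\rho_2$, we obtain uncountably many non-analytic solutions.

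The step I expect to be the main obstacle is the comparison showing $\rho_1(\theta),\rho_2(\theta)<\rho_{\max}(\theta)$ on a full right neighbourhood of $\theta_0$: all three curves share the same value and the same vanishing first derivative at the IC, so the separation must be read off second-order information, and one has to be careful when $U_{2)}(\theta_0)$ is close to either boundary of its admissible interval. Two degenerate cases should be flagged and excluded from the statement or handled separately by continuing the Taylor iteration further: $U_{2)}(\theta_0)=-\rho_0^2/2$, where the discriminant vanishes and no cone is formed; and $U_{2)}(\theta_0)=0$, where one of $\beta_{1,2}$ is zero and the separation of the two branches only becomes visible at a higher order.
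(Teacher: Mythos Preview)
Your proof follows essentially the same approach as the paper's: both verify that at a maximum of $U$ the two analytic solutions from Section~\ref{sec:singCond} have non-positive second derivatives (so both belong to the negative explicit IVP of Table~\ref{tab:pvi1}), and then invoke Theorem~\ref{th:sandwitch} to obtain the convergence cone. The paper's argument is considerably briefer and does not spell out the Lipschitz verification or the degenerate endpoints; your write-up is more careful on those points.

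One remark on the step you flag as the main obstacle, namely showing $\rho_1(\theta),\rho_2(\theta)<\rho_{\max}(\theta)$ on a right neighbourhood of $\theta_0$: a second-order Taylor comparison is not needed here. The upper-bound theorem in Section~\ref{sec:maximalCurve} already gives $\rho\le\rho_{\max}$ for every solution of the ODE, with equality only at critical points. Since the paper's standing hypothesis is that critical points form a finite (hence isolated) set $\mathcal{A}$, shrinking $b$ so that $(\theta_0,b]$ contains no further critical point immediately yields the strict inequality, and the Lipschitz hypothesis of Theorem~\ref{th:sandwitch} then holds on the cone by Proposition~\ref{th:SufConPL}.
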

\begin{proof}
	Sections~\ref{sec:mainResult} proves a maximum of two analytical solutions $x_1$, $x_2$ with negative second-order derivatives in the IC. 
	It means that, both analytic solutions are related with the negative explicit IVP.
	\begin{equation}
		\left\lbrace
		\begin{array}{l}
		\frac{d\rho}{d\theta} = -\sqrt{U(\theta)-\rho^2}\\
		\rho(\theta_0) = \rho_0,,
		\end{array}
		\right.
	\end{equation}
\end{proof}
	Calling Theorem~\ref{th:sandwitch} we proof the existence of this convergence cone and the multiple solutions.
	
Now, consider another IVP~\eqref{eq:ex3_parab} similar to~\eqref{eq:ex1}:
\begin{equation}
\left\lbrace
\begin{array}{l}
\frac{d\rho}{d\theta} = g(\theta,\xi)\\
\rho(0) = \frac{\pi}{4},
\end{array}
\right.
\label{eq:ex3_parab}
\end{equation}
where $g(\theta,\xi) = -\sqrt{\frac{\pi^2}{16} -\frac{\pi^2}{128}\theta^2 - \xi^2}$. Again, the IC $\rho(0) = \frac{\pi}{4}$ is a critical point, but only this point of the domain is critical.
In the previous example there were a  dense set of critical points, the solution $x_1(\theta)=1$, due to the fact that the ODE was autonomous.
Contrary, this example presents a non-autonomous ODE.
As only one critical point appear in the equation, solutions of different IVP with the same ODE associated cannot cross it and they only can merge in the unique critical point.

Figure~\ref{fig:ex2} shows the critical curve in red color and the two analytical solutions obtained using our method around in blue. We show also that solutions of an IVP that have its IC in the convergence cone necessary converge to the critical point as we prove in Theorem~\ref{th:sandwitch}.

\begin{figure}[h]
	\centering
	\subfloat[]{\includegraphics[width=0.5\textwidth]{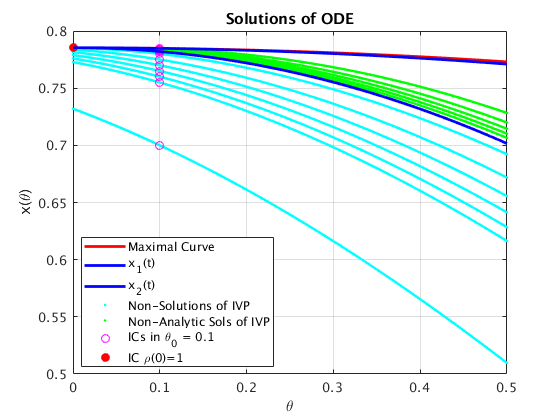}}
	\subfloat[]{\includegraphics[width=0.5\textwidth]{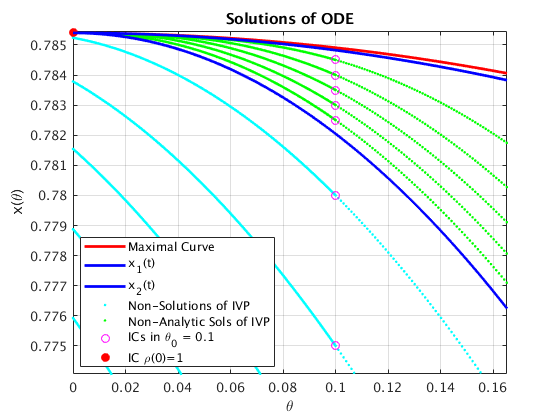}}
	\caption{. The analytic solutions of the IVP~\eqref{eq:ex3_parab} $x_1(\theta)$ and $x_2(\theta)$ are represented with blue lines.
		Both create a convergence cone where non-analytic solutions (represented with green) are also solutions to the IVP and converge to IC $(0,\frac{\pi}{4})$.
		Cian curve are solutions of the ODE but not of the ICV because they does not reach the IC $(0,\frac{\pi}{4})$. The maximal curve are represented in red color.}
	\label{fig:ex2}
\end{figure}

\appendix
\section{Perspective Parametrizations of Curves}
\label{sec:appPersParam}

There are three main manner in which the perspective parametrizations of regular curves $\mathcal{C}\subset\mathbb{R}^2$ appears in computer vision problems. All of them are equivalent but working with a specific one brings advantages of simplifying calculus respect to the others.
For our case, the problem is simplified drastically when use the called \emph{polar perspective parametrization}.
This parametrization maps each angle $\theta$ between the interval $[0,\pi]$ into each point of the curve.
We can imagine that the projection set is the circumference with radius the unit (that it is parametrized by an angle), see Figure~\ref{fig:persParam}.

\begin{figure}[h]
	\centering
	\includegraphics[width=0.9\textwidth]{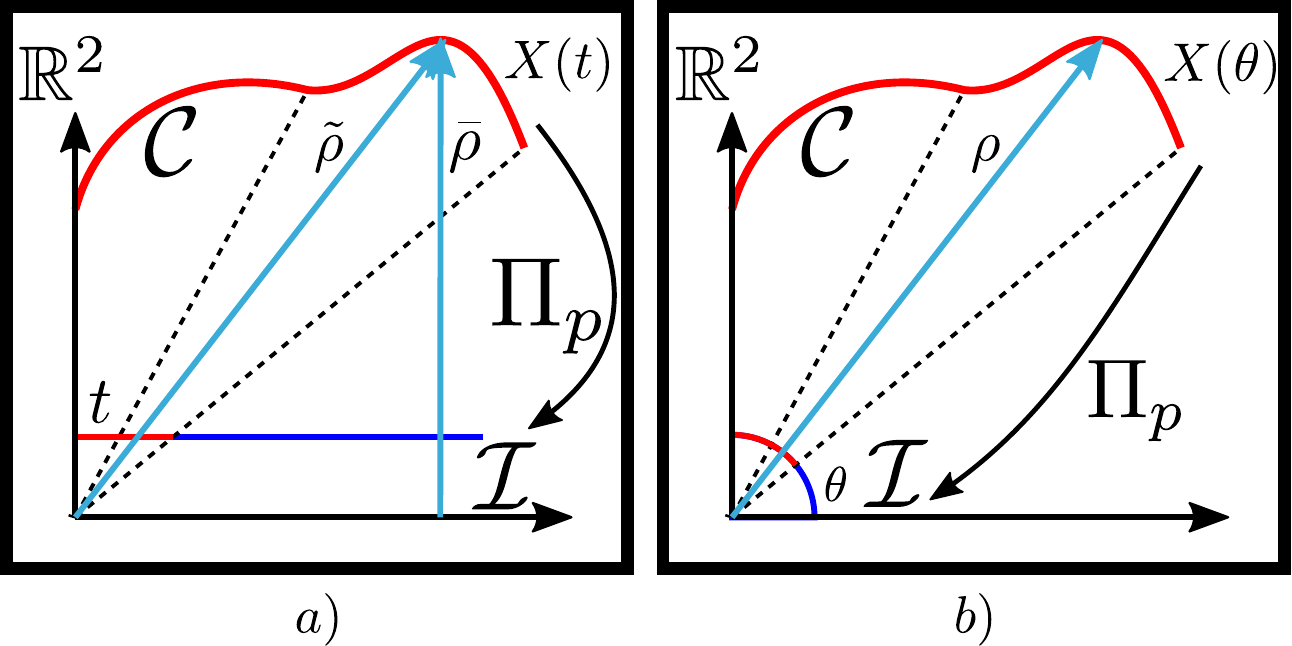}
	\caption{Most common perspective parametrizations in computer vision. Figure a) shows two common \emph{Cartesian perspective parametrizations}. Figure b) shows the \emph{polar perspective parametrization}.}
	\label{fig:persParam}
\end{figure}

The most common perspective parametrization $(I,X_{\bar{\rho}})$ of a curve $\mathcal{C}$ is expressed in terms of a \emph{depth function} $\bar{\rho}:I\longmapsto\mathbb{R}^+$ that measures the distance of each point of the curve $\mathcal{C}$ with respect to the $x$-axis.
The domain of the depth function $t\in I\subset\mathbb{R}$ is an interval of the called \emph{image line}.
\begin{align}
X_{\bar{\rho}}:I &\longmapsto \mathbb{R}\cross\mathbb{R}^+\nonumber\\
t   &\longmapsto \bar{\rho}(t)(t,1)
\end{align}

The vector $(t,1)$ indicate the direction of projection for each point of the image line.
The modulus of this vector $\lVert (1,t) \rVert_2 = \sqrt{1+t^2}$ that is not of length $1$ for all direction.
If we normalize this vector and multiply and divide this value to the previous parametrization we obtain a new parametrization.
\begin{align}
X_{\tilde{\rho}}:I &\longmapsto \mathbb{R}\cross\mathbb{R}^+\nonumber\\
t   &\longmapsto \frac{\tilde{\rho}(t)}{\sqrt{1+t^2}}(t,1),
\end{align}
where $\tilde{\rho} = \bar{\rho}\sqrt{1+t^2}$. Now, the \emph{depth function} $\tilde{\rho}$ measures the distance between each point of the curve $\mathcal{C}$ and the coordinates origin. The previous parametrizations are called \emph{Cartesian perspective parametrizations}.
Both parametrization are represented in Figure~\ref{fig:persParam}.a).
Observe that the parametrization $(I,X_{\tilde{\rho}})$ is radial.

Other perspective parametrizations of curves works with angles as a parameter.
The next parametrization is called \emph{polar perspective parametrization} and use $\theta$ as a variable.
\begin{align}
X_{\rho}:\left[0,\pi\right] &\longmapsto \mathbb{R}\cross\mathbb{R}^+\nonumber\\
\theta   &\longmapsto \rho(\theta)\left(\cos(\theta),\sin(\theta)\right),
\end{align}
The \emph{depth function} $\rho:\left[0,\pi\right]\longmapsto\mathbb{R}^+$ measures again the distance between each point of the curve $\mathcal{C}$ and the coordinate origin but its domain changes to the interval $\left[0,\pi\right]$.
It is easy to see that we can convert $X_{\tilde{\rho}}$ into $X_{\tilde{\rho}}$ using the next change of variable:
\begin{align}
\eta: I &\longmapsto \left[0,\pi\right] \nonumber\\
t   &\longmapsto \theta=\arctan{\frac{1}{t}},
\end{align}
and $\rho(\theta) = \tilde{\rho}(\eta^{-1}(t))$. Figure~\ref{fig:persParam}.b) shows the \emph{polar perspective parametrization} that is also radial.

For our own purpose we will use the \emph{polar perspective parametrization}.
As we will see, working with this parametrization yields to separable ODE when we compute the norm of the tangent vector field associated with it.
This means that the term of the derivatives of the \emph{depth function} are separated by a sum with respect the \emph{depth function} that it is very convenient.

Observe that any perspective parametrization $(I,X_\rho)$ of a curve $\mathcal{C}$ is associated with a \emph{depth function} $\rho$.
We say that the curve $\mathcal{C}$ is monotonically increasing or monotonically decreasing if the \emph{depth function} $\rho$ associated is monotonically increasing or monotonically decreasing respectively.


\bibliographystyle{abbrvnat}
\bibliography{egbib.bib}


\end{document}